\documentclass[final]{jotart}
\usepackage{amsmath}
\usepackage{amssymb}
%
\theoremstyle{proclaim}
\newtheorem{theorem}{Theorem}[section]
\newtheorem{lemma}[theorem]{Lemma}
\newtheorem{corollary}[theorem]{Corollary}
\newtheorem{proposition}[theorem]{Proposition}
\theoremstyle{statement}
\newtheorem{remark}[theorem]{Remark}
\newtheorem{definition}[theorem]{Definition}
\newtheorem{example}[theorem]{Example}
\theoremstyle{fancyproclaim}
\newtheorem*{vartheorem}{ }
\numberwithin{equation}{section}

%

\usepackage{amsfonts,amsthm,mathrsfs,latexsym,paralist, xypic}
\usepackage{tikz}
\usetikzlibrary{arrows,automata}

\tikzstyle{V}=[fill=black,circle,scale=0.4, outer sep = 4pt]

\DeclareMathOperator{\Ad}{Ad}
\DeclareMathOperator{\Aut}{Aut}

\newcommand{\z}{^{(0)}}
\renewcommand{\2}{^{(2)}}
\newcommand{\inv}{^{-1}}

\newcommand{\bi}{\begin{itemize}}
\newcommand{\ei}{\end{itemize}}
\newcommand{\be}{\begin{enumerate}}
\newcommand{\ee}{\end{enumerate}}

\newcommand{\C}{\mathbb{C}}

\newcommand{\T}{\mathbb{T}}
\renewcommand{\H}{\mathcal{H}}

\newcommand{\G}{\mathcal{G}}
\newcommand{\K}{\mathcal{K}}

\newcommand{\R}{\mathbb{R}}

\newcommand{\Z}{\mathbb{Z}}
\newcommand{\Hom}{\text{Hom}}

\begin{document}
\issueinfo{00}{0}{0000} 
\commby{Kenneth R. Davidson}
\pagespan{101}{110}
\date{Month dd, yyyy}
\revision{Month dd, yyyy}
\title{$K$-theory and Homotopies of 2-cocycles on Transformation Groups}
\author{Elizabeth Gillaspy}
\address{ELIZABETH GILLASPY, Department of Mathematics, University of Colorado - Boulder, Boulder, CO  80309-0395, USA}
\email{elizabeth.gillaspy@colorado.edu}


\begin{abstract}
This paper constitutes a first step in the author's program to investigate the question of when a homotopy of 2-cocycles $\omega = \{\omega_t\}_{t \in [0,1]}$ on a locally compact Hausdorff groupoid $\mathcal{G}$ induces an isomorphism of the $K$-theory groups of the reduced twisted groupoid $C^*$-algebras:
\[K_*(C^*_r(\mathcal{G}, \omega_0)) \cong K_*(C^*_r(\mathcal{G}, \omega_1)).\]
Generalizing work of Echterhoff et al.~from \cite{ELPW}, we show that if $\G = G \ltimes X$ is a transformation group such that $G$ satisfies the Baum-Connes conjecture with coefficients, a homotopy $\omega = \{\omega_t\}_{t \in [0,1]}$ of 2-cocycles on $G \ltimes X$ gives rise to an isomorphism 
\[K_*(C^*_r(G \ltimes X, \omega_0)) \cong K_*(C^*_r(G \ltimes X, \omega_1)).\]
\end{abstract}
\begin{subjclass}
46L80, 46L55.
\end{subjclass}
\begin{keywords}
Transformation group, twisted groupoid $C^*$-algebra, $K$-theory, \\ groupoid, 2-cocycle.
\end{keywords}
\maketitle

\section{INTRODUCTION}
In this paper, we study the question of when a homotopy of 2-cocycles $\omega = \{\omega_t\}_{t \in [0,1]}$ on a transformation group $G \ltimes X$ induces an isomorphism of the twisted $K$-theory groups
\[K_*(C^*_r(G \ltimes X, \omega_0)) \cong K_*(C^*_r(G \ltimes X, \omega_1)).\]
Variants on this question have been investigated by Packer and Raeburn in \cite{twisted-xprod1}, \cite{twisted-xprod2} and Echterhoff, L\"uck, Phillips, and Walters in \cite{ELPW}, but its origins lie in the results established about the  structure and $K$-theory of the rotation algebras by Rieffel, Pimsner and Voiculescu, among others, in the early 1980s.

For $\theta \in \R$, let $A_\theta$ denote the universal rotation algebra; that is, the algebra generated by two unitaries $u,v$ satisfying the commutation relation 
\[ uv = vu \mathrm{e}^{2\pi \mathrm{i} \theta}.\]
We can also realize $A_\theta$ as a twisted group $C^*$-algebra: 
\[A_\theta = C^*_r(\Z^2, c_\theta),\]
 where the twisting 2-cocycle $c_\theta: \Z^2 \times \Z^2 \to \T$ is given by 
\[ c_\theta\left( (j,k), (m,n) \right) = \mathrm{e}^{ 2\pi \mathrm{i} \theta km}.\]
Note that the map $\theta \mapsto c_\theta\left( (j,k), (m,n) \right) $ is continuous for each $( (j,k), (m,n)) \in \Z^2 \times \Z^2$; because of this continuity, we say that $\{c_\theta\}_{\theta \in \R}$ is a \emph{homotopy of 2-cocycles} on $\Z^2$.

Rieffel proved in 1981 \cite{irr-k-thy} that although 
\[A_\theta \cong A_{\theta'} \Leftrightarrow [\theta] = \pm [\theta'] \in \R/\Z,\]  the Morita equivalence relation on the rotation algebras is less strict: two rotation algebras $A_\theta, A_{\theta'}$ are Morita equivalent iff $\theta, \theta'$ are in the same orbit of a natural action of $GL_2(\Z)$ on $\R$ (which is defined in Section 2 of \cite{irr-k-thy}). 
Finally, Pimsner and Voiculescu proved in \cite{pims-voic} that for any $\theta \in \R$, we have 
\begin{equation}
K_0(A_\theta) \cong \Z \oplus \Z \cong K_1(A_\theta).\label{kthy-irr}
\end{equation}

 The  example of the rotation algebras thus tells us that $K$-theory is the appropriate $C^*$-algebraic invariant for which to ask whether a homotopy of 2-cocycles is detected by this invariant: the homotopy of 2-cocycles that gives rise to the rotation algebras induces a $K$-theoretic equivalence, but not a $C^*$-algebraic isomorphism or even a Morita equivalence.    In other words, these more precise invariants 
will not, in general, be preserved under a homotopy of 2-cocycles.

The Pimsner-Voiculescu result \eqref{kthy-irr} about the $K$-theory of the rotation algebras was vastly generalized in a 2010 paper \cite{ELPW} by Echterhoff, L\"uck, Phillips, and Walters.  For a second countable locally compact Hausdorff group $G$  that satisfies the Baum-Connes conjecture with coefficients $\K$, Echterhoff et al.~proved in Theorem 1.9 of \cite{ELPW} that if $\{\omega_t\}_{t\in [0,1]}$ is a homotopy of 2-cocycles on $G$, then 
\[K_*(C^*_r(G, \omega_0)) \cong K_*(C^*_r(G, \omega_1)).\]
Our main theorem in this paper is an  extension of this result  to the case when $G$ is a transformation group $G = H \ltimes X$:
\begin{vartheorem}[Theorem \ref{main}]
Let $G$ be a second countable locally compact Hausdorff group acting on a second countable locallly compact Hausdorff space $X$ such that $G$ satisfies the Baum-Connes conjecture with coefficients, and let $\omega$ be a homotopy of continuous 2-cocycles on the transformation group $G \ltimes X$.  For any $t \in [0,1]$,  the $*$-homomorphism \[q_t: C^*_r(G \ltimes X \times [0,1], \omega) \to C^*_r(G \ltimes X, \omega_t),\]
given on $C_c(G \ltimes X \times [0,1])$ by evaluation at $t \in [0,1]$, induces an isomorphism 
\[K_*(C^*_r(G \ltimes X \times [0,1], \omega)) \cong K_*(C^*_r(G \ltimes X, \omega_t)).\] 
\end{vartheorem}

\begin{remark}
Higson and Kasparov proved in \cite{higson-kasparov} that every a-T-menable group satisfies the Baum-Connes conjecture with coefficients.  Examples of such groups include all amenable groups (hence all compact, abelian, and solvable groups), all free groups, and the Lie groups $SO(n, 1)$ and $SU(n,1)$.  Moreover, a-T-menability is inherited by closed subgroups, so any closed subgroup of the above groups also satisfies the Baum-Connes conjecture with coefficients.
\label{b-c}
\end{remark}

\begin{remark}
Theorem \ref{main} can also be viewed as a generalization of Theorem 4.2 from the 1990 paper \cite{twisted-xprod2} of Packer and Raeburn.
  When translated into the notation of the current paper, Packer and Raeburn's Theorem 4.2  states that if $G$ is a discrete subgroup of a solvable simply-connected Lie group,
 and $G$ acts on a locally compact Hausdorff space $X$, then a homotopy of 2-cocycles $\{\omega_t\}_{t \in [0,1]}$ on $G \ltimes X$ induces an isomorphism
\[ K_*(C^*_r(G \ltimes X, \omega_0)) \cong K_*(C^*_r(G \ltimes X, \omega_1)).\]
By Theorem 8.2 of \cite{julg-kasparov}, discrete subgroups of solvable simply-connected Lie\\ groups satisfy the Baum-Connes conjecture with coefficients, 
so our Theorem \ref{main} applies to a much broader class of groups than those covered in Theorem 4.2 of \cite{twisted-xprod2}.
\end{remark}

\subsection{Context and Future work}
A transformation group $G \ltimes X$ is an example of a \emph{groupoid}, a class of mathematical objects that includes groups, group actions, equivalence relations, and group bundles.  The study of the full and reduced $C^*$-algebras $C^*(\G), C_r^*(\G)$ associated to a locally compact groupoid $\G$ was initiated by Jean Renault in \cite{renault}, and has been pursued actively by many researchers.
  Although Renault also defined the twisted groupoid $C^*$-algebras $C^*(\G, \omega), C^*_r(\G, \omega)$ for a 2-cocycle $\omega \in Z^2(\G, \T)$ in \cite{renault}, these objects have received relatively little attention until recently. However, it has now become clear that twisted groupoid $C^*$-algebras can help answer many questions about  the structure of untwisted groupoid $C^*$-algebras (cf.~\cite{cts-trace-gpoid-II}, \cite{cts-trace-gpoid-III}, \cite{clark-aH}, \cite{dd-fell}, \cite{jon-astrid}), as well as classifying those $C^*$-algebras which admit diagonal subalgebras (also known as Cartan subalgebras) --- cf.~\cite{c*-diagonals}. 
    In another direction,  \cite{TXLG} 
  establishes how the $K$-theory of twisted groupoid $C^*$-algebras connects to the classification of $D$-brane charges in string theory.
  

Motivated by these applications of twisted groupoid $C^*$-algebras and their $K$-theory, we propose to investigate the question of when a homotopy of the twisting 2-cocycle gives rise to a $K$-theoretic isomorphism 
\begin{equation}
K_*(C^*_{r}(\G, \omega_0)) \cong K_*(C^*_{r}(\G, \omega_1)).
\label{k-equiv}
\end{equation}
In addition to the transformation-group case $\G = G \ltimes X$ considered in the present paper, we will address this question in a forthcoming paper \cite{eag-kgraph} for the case when $\G = \G_\Lambda$ is the groupoid  associated to a higher-rank graph $\Lambda$.  (A generalization of directed graphs, higher-rank graphs and their associated groupoids and $C^*$-algebras were introduced in \cite{kp}.) 

We also explore in \cite{eag-bdl} the situation where $\G$ is a locally compact group bundle $\pi: \G \to M$ over a paracompact space.  Our interest in these groupoids was inspired by the special case when $\G$ is a symplectic vector bundle; in this case, Theorem 1 of \cite{plymen-weyl-bdl} combines with the Thom isomorphism to tell us that the homotopy of 2-cocycles associated to the symplectic form gives rise to the isomorphism \eqref{k-equiv} of the $K$-theory groups of the twisted groupoid $C^*$-algebras.  Corollary 3.4 of \cite{eag-bdl} presents a substantial generalization of this result.

We are unaware of any examples of groupoids $\G$ and homotopies $\omega$ of cocycles where \eqref{k-equiv} fails to hold. 

\subsection{Outline and Standing Hypotheses}
To tackle the proof of Theorem \ref{main}, we will first need to understand  the reduced twisted groupoid $C^*$-algebra $C^*_r(G \ltimes X, \omega)$; Section \ref{2} reviews this construction for general groupoids  $\G$, in order to establish the context and simplify formulas.  In Section \ref{4} we show that a homotopy of 2-cocycles on a compact groupoid $\G$ gives rise to a trivial bundle of $C^*$-algebras over $[0,1]$.  Section \ref{pr}  describes carefully the isomorphism between the   twisted crossed product $C^*$-algebra defined by Packer and Raeburn in \cite{twisted-xprod1}, and the twisted groupoid $C^*$-algebra described in Section \ref{2}; this material is no doubt well known to experts but we were unable to find a precise reference, so we include it here.  Finally, in Section \ref{6} we combine the results of the preceding sections in order to prove Theorem \ref{main}, following the line of argument presented in Theorem 1.9 of \cite{ELPW}.  Since this argument relies on a technical result in equivariant $KK$-theory (namely, Theorem 1.5 of \cite{going-down-functors}), Section \ref{6} opens with a section reviewing those elements of Kasparov's equivariant $KK$-theory that we will need to invoke.

In order to use the $KK$-theoretic result from \cite{going-down-functors}, we will need to assume that all $C^*$-algebras under consideration are separable and all groups are second countable.  The same hypotheses are invoked in \cite{twisted-xprod1}.  Consequently, in Section \ref{pr} we begin to require all groups and spaces in question to be Hausdorff, locally compact and second countable; for the material earlier in the paper we do not need to assume second countability.  However, throughout this paper we will assume that all groups and spaces are locally compact Hausdorff.

\section{GROUPOIDS}
\label{2}

In this section we review many of the basic concepts and constructions from the theory of groupoid $C^*$-algebras, focusing on the case where our groupoid $\G$ is a transformation group $G \ltimes X$.  Many of the definitions are given first for general groupoids, to simplify notation and to contextualize our work in this paper.

\begin{definition} A \emph{groupoid} is a set $\mathcal{G}$ equipped with a subset $\mathcal{G}^{(2)} \subseteq \mathcal{G} \times \mathcal{G}$ (called the \emph{set of composable pairs}), a multiplication map $\mathcal{G}^{(2)} \to \G$ given by $(x, y) \mapsto xy$, and an inverse map $\mathcal{G} \to \mathcal{G}$, written $x \mapsto x^{-1}$, such that:
\bi
\item If $(x, y), (y,z) \in \mathcal{G}^{(2)}$, then so are $(x, yz)$ and $(xy, z)$, and $x(yz) = (xy)z$;
\item For any $x \in \mathcal{G}$, the pair $(x, x^{-1}) \in \mathcal{G}^{(2)}$;
\item For any $(x, y) \in \mathcal{G}^{(2)}$, we have $x^{-1}(xy) = y$ and $(xy)y^{-1} = x$.
\ei
Any groupoid comes equipped with \emph{range} and \emph{source} maps $r, s: \mathcal{G} \to \mathcal{G}$, defined by $r(x) = x^{-1}x$ and $s(x) = xx^{-1}$. 
 Observe that $r$ and $s$ have a common image, which we call the \emph{unit space} of $\mathcal{G}$, and denote $\mathcal{G}^{(0)}$.
 If $u \in \mathcal{G}\z$, we will write 
 \[\mathcal{G}_u = \{ x \in \mathcal{G}: s(x) = u\}; \qquad \mathcal{G}^u = \{ x \in \mathcal{G}: r(x) = u\}.\]
\end{definition}

In this paper we will consider almost exclusively the following class of groupoids. 


\begin{example}
Suppose $G$ is a group acting (on the left) on a space $X$.  We define a groupoid $G \ltimes X$, called the \emph{transformation group}, to be $G \times X$ as a set, with $(G\ltimes X)\z = X$:
\[s(\gamma,u) = \gamma\inv \cdot u, \quad r(\gamma,u) = u.\]
In other words, we can think of an element $x = (\gamma, u) \in G \ltimes X$ as an arrow (labeled $\gamma$) taking us from the point $v := \gamma\inv \cdot u \in X$ to the point $u$:
\[
          		\begin{tikzpicture}
			\node (Left) [V, label=left:{$u$}] at (-1,0){};
			\node (Right) [V, label=right:{$v$}] at (1,0){};
		\begin{scope}[->,  line width=1]
			\draw  (Right) to[bend right] node[above] {$\gamma$}  (Left); 	
		\end{scope}
	\end{tikzpicture}
\]

Then $\left((\gamma, u), (\eta, v)\right) \in (G \ltimes X)\2 \Leftrightarrow \gamma\inv \cdot u =v,$
and 
\[(\gamma,u) \cdot (\eta, \gamma\inv \cdot u) = (\gamma \eta, u); \qquad (\gamma,u)\inv = (\gamma\inv, \gamma\inv \cdot u).\]
Note that for any $u \in X$, we have $(G \ltimes X)_u \cong (G \ltimes X)^u \cong G$.
\label{transf-gp}
\end{example}

\begin{example}
Let $G$ be a group acting trivially on the one-point space. A moment's thought reveals that the associated transformation group is isomorphic to the group $G$.  In other words, groups are examples of transformation groups and hence of groupoids.
\end{example}

\begin{remark}
In this paper, we will use $x,y,z$ to denote elements of an arbitrary groupoid $\G$, whereas Greek letters such as $\gamma, \eta$ will denote elements of a group $G$.  The letters $u,v$ will denote elements of the unit space of our groupoid -- so $u, v \in X$ if the groupoid $\G$ under consideration is a transformation group $G \ltimes X$.
\end{remark}

\begin{definition} We say that a groupoid $\mathcal{G}$ is a \emph{locally compact Hausdorff groupoid} or \emph{LCH groupoid} if $\mathcal{G}$ is a locally compact Hausdorff  topological space such that multiplication and inversion are continuous (when $\mathcal{G}^{(2)}$ has the topology induced by the product topology on $\mathcal{G} \times \mathcal{G}$).
\end{definition}


\begin{remark}
Since the range and source maps can be constructed from multiplication and inversion, it follows that $r,s$ are also continuous in a LCH groupoid.
\end{remark}

\begin{example} If $\mathcal{G} = G \ltimes X$ is a transformation group, where $G$ and $X$ are locally compact Hausdorff spaces and the action of $G$ on $X$ is continuous, then the product topology on $G \times X$ makes $G \ltimes X$ into a LCH groupoid.  We will always use this topology on $G \ltimes X$ in this paper.
\label{top}
\end{example}

\begin{definition} Let $\mathcal{G}$ be a LCH groupoid.  A (continuous) map $\omega: \mathcal{G}^{(2)} \to \mathbb{T}$ is called a \emph{(continuous) 2-cocycle} if 
\begin{equation} \omega(x, y) \omega(xy, z)= \omega(x, yz) \omega(y, z)
\label{cocycle}
\end{equation}
whenever $(x, y), (y, z) \in \mathcal{G}^{(2)}$, and if 
\begin{equation} \omega(x, s(x)) = 1 = \omega(r(x), x)\label{units} \end{equation}
for any $x \in \mathcal{G}$.
\end{definition}

As 2-cocycles are the only flavor of cocycle we will discuss in this paper, we will usually drop the 2 and  refer to them simply as \emph{cocycles}.

\begin{example}
For any groupoid $\G$, the function $\omega: \G^2 \to \T$ given by $\omega(x,y) =1 \ \forall (x,y) \in \G\2$ is a 2-cocycle, called the \emph{trivial 2-cocycle}.
\end{example}

\begin{example}
Let $\G = \Z^2$ and fix $t \in \R$.  Then the function $c_t: \Z^2 \times \Z^2 \to \T$ given by 
\[c_t\left( (j,k), (m,n) \right) := \mathrm{e}^{2\pi \mathrm{i} t (km)}\]
is a 2-cocycle.
\end{example}

\begin{remark}
\label{coh-gp}
If $\omega_0, \omega_1$ are two 2-cocycles on $\G$, then the pointwise product $\omega_0 \omega_1$ also satisfies the cocycle condition, as does the pointwise inverse $\omega_0\inv$ for any cocycle $\omega_0$. Thus, the set of 2-cocycles on $\G$ forms a group, usually denoted $Z^2(\G, \T)$.
\end{remark}

\begin{definition}
Let $\mathcal{G}$ be a LCH groupoid.  We consider $\mathcal{G} \times [0,1]$ to be a LCH groupoid by equipping it with the product topology (using the standard topology on $[0,1]$).  Set 
\[\left(\mathcal{G}\times[0,1]\right)\2 = \mathcal{G}\2 \times [0,1].\]
That is, the groupoid structure on $\G \times [0,1]$ is that of a bundle of groupoids over $[0,1]$, so that all groupoid operations on $\G \times [0,1]$ preserve the fibers.

  We say that $\omega$ is a \emph{homotopy of 2-cocycles on $\mathcal{G}$} if $\omega: \left(\mathcal{G} \times [0,1]\right)\2 \to \T$ is a continuous 2-cocycle.
  \end{definition}

\begin{remark}
Every homotopy of cocycles on $\G$ gives rise to a  family \\ $\{\omega_t\}_{t \in [0,1]}$ of continuous 2-cocycles on $\G$ which varies continuously in $t$.
\end{remark} 

In order to associate $C^*$-algebras to LCH groupoids $\G$ and continuous cocycles $\omega$, we will start by turning the continuous compactly supported functions $C_c(\G)$ into a convolution algebra, and then taking an appropriate completion.  To do so, we need to integrate over $\G$, which requires a Haar system on $\G$.

\begin{definition}
Let $\G$ be a locally compact Hausdorff groupoid.  A collection $\{\lambda^u\}_{u \in \G\z}$ of non-negative Radon measures is a \emph{Haar system} if 
\bi
\item supp($\lambda^u) = \G^u$ for any $u \in \G\z$,
\item For any $f \in C_c(\G)$, the function $u \mapsto \int_{\G^u} f(x) \mathrm{d}\lambda^u(x)$ is in $C_0(\G\z)$,
\item The system of measures is left-invariant: For any $f \in C_c(\G)$ and any $x \in \G$,
\[\int_{\G^{s(x)}} f(xy) \mathrm{d}\lambda^{s(x)}(y) = \int_{\G^{r(x)}} f(y) \mathrm{d}\lambda^{r(x)}(y).\]
\ei
\end{definition}

\begin{example}
If $G$ is a LCH group, then $G\z $ contains only one point, namely, the unit $e$ of $G$, and so the first two conditions of the definition are irrelevant.  The third condition tells us that any Haar measure on $G$ constitutes a Haar system.
\end{example}

Unlike for groups, Haar systems for groupoids need not exist or be unique.  One generally assumes from the beginning the existence of a fixed Haar system on the groupoid in question, a precedent we will follow in this paper.

\begin{example} If $\G = G \ltimes X$ for a LCH group $G$ and a LCH space $X$, then fix a Haar measure $\lambda$ on $G$.  Setting $\lambda^u = \lambda$  for every $u \in X$ makes $\{\lambda^u\}_{u \in X}$ into a Haar system on $G \ltimes X$.  We will always use this Haar system on a transformation group in this paper.
\end{example}

Given a continuous cocycle $\omega$ on a LCH groupoid $\G$, and a Haar system $\{\lambda^u\}_{u \in \G\z}$ on $\G$,  we can turn $C_c(\G)$ into a convolution algebra $C_c(\G, \omega)$:
\begin{align}
 f *_\omega g(x) & := \int_{\G^{r(x)}} f(y) g(y\inv x)\, \omega\left( y, y\inv x \right) \, \mathrm{d}\lambda^{r(x)}(y)\\
f^*(x) & := \overline{f(x\inv)\, \omega\left(x, x\inv \right)}.
\label{algebra-str}
\end{align}
In the case when $\G = G \ltimes X$ is a transformation group, the formula for convolution becomes 
\begin{equation}
f*_\omega g(\gamma, u) := \int_G f(\eta, u) g(\eta\inv \gamma, \eta\inv  u)\, \omega\left( (\eta, u), (\eta\inv \gamma, \eta\inv u)\right) \, \mathrm{d}\lambda(\eta).
\label{conv}
\end{equation}

Convolution multiplication is evidently linear, but also is easily checked to be associative and to satisfy $f^* *_\omega g^* = (g *_\omega f)^*$ (cf.~\cite{renault} II.1).  One needs to invoke the cocycle condition \eqref{cocycle} in order to show associativity.

To make $C_c(\G, \omega)$  into a $C^*$-algebra, we need to complete it with respect to a suitable norm.  There are two $C^*$-algebras canonically associated to $C_c(\G, \omega)$; however, in this paper we will only be concerned with the reduced twisted $C^*$-algebra of a transformation group $\G = G \ltimes X$, so we present only this construction here.  The reader who wishes to understand groupoid $C^*$-algebras in more generality is referred to \cite{muhly-gpoids} or \cite{renault}.

\begin{definition}
\label{l2}
Given a groupoid $\G$, let $\mu$ be a measure on $\G\z$ with full support.  As in \cite{muhly-gpoids} Definition 2.45, we construct a measure $\nu\inv$ on $\G$, which is the pullback, under the map $x \mapsto x\inv$, of the measure induced by $\mu$ on $\G$.  To be precise, a function $\xi$ on $\G$ is in $L^2(\G, \nu\inv)$ iff 
\[\| \xi\|_2^2 := \int_{\G\z} \int_{\G^u} | \xi(x\inv) |^2\mathrm{d}\lambda^u(x) \mathrm{d}\mu(u) < \infty.\]
In the case when $\G = G \ltimes X$, so $\G\z = X$ and $\lambda^u$ is Haar measure on $G$, the formula above becomes
\[ \|\xi\|_2^2 = \int_X \int_G |\xi(\gamma\inv, \gamma\inv \cdot u)|^2 \, \mathrm{d}\lambda(\gamma) \mathrm{d}\mu(u).\]
\end{definition}

Convolution multiplication defines a $*$-representation  of $C_c(\G, \omega)$ on\\  $L^2(\G, \nu\inv)$ which we will denote $\text{Ind } \mu$: 

\begin{definition}
For $f \in C_c(\G, \omega)$, we define the operator $\text{Ind}\, \mu(f)$ on $L^2(\G, \nu\inv)$ by
\begin{align*}
\text{Ind }\mu(f) \xi(x) := (f *_\omega \xi)(x) = \int_{\G} f(xy) \xi(y\inv )\, \omega(xy, y\inv ) \, \mathrm{d}\lambda^{s(x)}(y)
\end{align*}
for $\xi \in L^2(\G, \nu\inv)$.
\end{definition}

\begin{remark}
The notation $\text{Ind}\,\mu$ indicates that this representation of\\ $C_c(\G, \omega)$ is induced from the representation  of $C_0(\G\z)$ on $L^2(\G\z, \mu)$ by multiplication operators.
\end{remark}

  A variation on Theorem 6.18 from \cite{folland} 
tells us that $\text{Ind }\mu(f) \xi \in L^2(\G, \nu\inv)$ whenever $f \in C_c(\G, \omega)$ and $\xi \in L^2(\G, \nu\inv)$, and moreover that the operator norm $\| \text{Ind }\mu(f)\| $ is bounded by the $L^1$-norm $\|f\|_{\nu\inv,1}$ of $f$ with respect to the measure $\nu\inv$. Since convolution multiplication is linear, associative, and $*$-preserving, it follows that $\text{Ind}\, \mu$ is a $*$-representation of $C_c(\G, \omega)$.

\begin{definition}
The \emph{reduced norm $\|\cdot \|_r$} on $C_c(\G, \omega)$ is given by
\begin{align*}
\|f\|_r & := \|\text{Ind } \mu(f)\| .
\end{align*}
The completion of $C_c(\G, \omega)$ with respect to the norm $\| \cdot \|_r$ is the \emph{reduced twisted groupoid $C^*$-algebra $C^*_r(\G, \omega)$}.
\label{reduced}
\end{definition}

\begin{remark}
One can check that any full measure $\mu$ on $\G\z$ will give rise to an equivalent norm to $\| \cdot \|_r$, so the $C^*$-algebra $C^*_r(\G, \omega)$ is independent of $\mu$.  In fact, there are multiple equivalent definitions of the reduced norm (cf.~\cite{moore-schochet} Definition 6.3 and \cite{renault} Definition III.2.8). 
\end{remark}

\subsection{Examples}
We now present a few examples of twisted transformation-\\ group $C^*$-algebras.
\begin{example}
\label{twisted-gp}
If $X = pt$ is trivial, then any 2-cocycle $\omega$ on $G \ltimes X$ is simply a 2-cocycle on the group $G$, and the reduced twisted transformation-group $C^*$-algebra is  the reduced twisted group $C^*$-algebra $C^*_r(G, \omega)$.
\end{example}

\begin{example} (This example is a special case of Proposition \ref{pr&gpoid} below.)
Given a nontrivial action $\alpha$ of $G$ on $X$, consider the trivial 2-cocycle $1$ on $G \ltimes X$.  In this case, $C^*_r(G \ltimes X, 1)$ is isomorphic to the reduced crossed product $C^*$-algebra $C_0(X) \rtimes_r G$ (cf. Definition 7.7 of \cite{xprod}), where the action of $G$ on $C_0(X)$ is induced from $\alpha$.  The isomorphism $\phi: C^*_r(G \ltimes X, 1) \to C_0(X) \rtimes_r G$ is given on the dense subalgebra  $C_c(G \times X)$  by
\[\phi(f)(\gamma, u) = \Delta(\gamma)^{-1/2}f(\gamma, u).\]
Here $\Delta$ denotes the modular function of the group $G$.

\end{example}

\begin{example}
\label{nct}
Let $G \ltimes X$ be a transformation group, and let $\omega: G\times G \to \T$ be a continuous 2-cocycle on the group $G$.  
Then $\omega$ also defines a 2-cocycle $\tilde{\omega}$ on $G \ltimes X$:
\[\tilde{\omega}\left( (\gamma, u), (\eta, \gamma\inv u) \right) := \omega(\gamma, \eta).\]
Although $\tilde{\omega}$ does not depend on $X$, the associated twisted groupoid $C^*$-algebra $C^*_r(G \ltimes X, \tilde{\omega})$ is not, in general, isomorphic to $C_0(X) \otimes C^*_r(G, \omega)$.  For example, consider the case $X = \T^2, G = \Z^2$, where the action of $\Z^2$ on $\T^2$ is given by 
\[(m,n) \cdot (z, w) = (z a_1^n, w a_2^m)\]
for a fixed $(a_1, a_2) \in \T^2$, and where the 2-cocycle $\omega$ on $\Z^2$ is given by 
\[\omega\left( (m,n), (j,k) \right) = a_3^{nj}\]
for $a_3 \in \T$ fixed.  Then the twisted transformation group $C^*$-algebra $C^*_r(\Z^2 \ltimes \T^2, \tilde{\omega})$ is not isomorphic to $C(\T^2) \otimes C^*_r(\Z^2, \omega)$.

To see this, let $\delta$ denote the Kronecker delta function.  Using the formulas given in \eqref{algebra-str} for the adjoint and product in $C_c(\Z^2 \ltimes \T^2, \tilde{\omega})$, one can check that the functions 
\begin{align*}
 \phi_1((m,n), (z,w)) = \delta_{(1,0)}(m,n) & \qquad \phi_2((m,n), (z,w)) = \delta_{(0,1)}(m,n) \\
 \psi_1((m,n), (z,w)) = z \delta_{(0,0)}(m,n) & \qquad \psi_2((m,n),(z,w)) = w \delta_{(0,0)}(m,n)
\end{align*}
are unitaries which generate the $*$-algebra $C_c(\Z^2 \ltimes \T^2, \tilde{\omega})$, and which satisfy the commutation relations 
\begin{align}
\phi_1 \phi_2 = a_3 \phi_2\phi_1 & \qquad \phi_i \psi_j = a_j \psi_j \phi_i \textrm{ if } i\not= j\\
\phi_i \psi_i = \psi_i \phi_i & \qquad  \psi_i \psi_j = \psi_j \psi_i \ \forall \ i, j \in \{1,2\}.
\label{nct-comm-relns}
\end{align}

While $C(\T^2) \otimes C^*_r(\Z^2, \omega)$ is also generated by four unitaries, two of those (corresponding to the generators of $C(\T^2))$ are central.  Since none of the generators $\phi_1, \phi_2, \psi_1, \psi_2$ is central (unless $a_1 = a_2 = 1$), it follows that $C^*_r(\Z^2 \ltimes \T^2, \tilde{\omega})$ is only isomorphic to $C(\T^1) \otimes C^*_r(\Z^2, \omega)$ when $\Z^2$ acts trivially on $\T^2$.

In fact, $C^*_r(\Z^2 \ltimes \T^2, \tilde{\omega})$ can be realized as a twisted group $C^*$-algebra\\ $C^*_r(\Z^4, \sigma)$: writing $a_j = \mathrm{e}^{2\pi \mathrm{i} \theta_j}$, the 2-cocycle $\sigma$ is given by
\[\sigma( r, s) = \mathrm{e}^{\pi \mathrm{i} r \cdot \Theta s}\]
for $r,s \in \Z^4$, where 
\[\Theta = \begin{bmatrix}
0 & \theta_3 & 0& \theta_2 \\
-\theta_3 & 0 & \theta_1 & 0 \\
0 & -\theta_1 & 0 & 0 \\
-\theta_2 & 0 & 0 & 0
\end{bmatrix}.\]
The isomorphism $C^*_r(\Z^2 \ltimes \T^2, \tilde{\omega}) \cong C^*_r(\Z^4, \sigma)$ is given on the unitary generators by 
\[\phi_1 \mapsto \delta_{e_1}, \quad \phi_2 \mapsto \delta_{e_2}, \quad \psi_1 \mapsto \delta_{e_3}, \quad \psi_2 \mapsto \delta_{e_4}.\]
Thus, $C^*_r(\Z^2 \ltimes \T^2, \tilde{\omega})$ is an example of a noncommutative 4-torus.
\end{example}

\begin{example}[cf. Example 1.5 in \cite{selective-survey}, Theorem 4.1 in \cite{twisted-xprod1}]
Generalizing Example \ref{nct}, let $G$ be a locally compact second countable group with closed abelian normal subgroup $N$.  Then $G/N$ acts on $\hat{N}$ by 
\[\overline{\gamma}\cdot \phi(n) = \phi(\gamma\inv n \gamma);\]
the action is well-defined because $N$ is abelian.  Similarly, by choosing a section $s$ of the short exact sequence 
\[1 \to N \to G \to G/N \to 1\]
we can define a 2-cocycle $\sigma$ on the transformation group $G/N \ltimes \hat{N}$:
\[\sigma( (\overline{\gamma}, \phi), (\overline{\eta}, \overline{\gamma}\inv \cdot \phi) ) = \phi\left(s(\overline{\gamma})s(\overline{\eta}) s(\overline{\gamma\eta})\inv\right).\]

If we stop here, then the Fourier transform on $\hat{N}$ gives us an isomorphism $\psi: C^*_r(G/N \ltimes \hat{N}, \sigma) \to C^*_r(G)$; this follows from Proposition \ref{pr&gpoid} below and Theorem 4.1 of \cite{twisted-xprod1}. 
However, if we have an action of $G/N$ on another locally compact Hausdorff space $X$, then $\sigma$ gives us a 2-cocycle $\tilde{\sigma}$ on $G/N \ltimes \hat{N} \times X$ which is independent of $X$, and the twisted transformation-group $C^*$-algebra $C^*_r(G/N \ltimes \hat{N} \times X, \tilde{\sigma})$ is not, in general, isomorphic to a tensor product $C_0(X) \otimes C^*_r(G/N \ltimes \hat{N}, \sigma)$ or even to a crossed product $C_0(X \times \hat{N}) \rtimes_r G/N$.  
\end{example}

\section{THE COMPACT CASE}
\label{4}
As mentioned in the Introduction, our proof of Theorem \ref{main} was inspired by a 2010 paper \cite{ELPW} by Echterhoff, L\"uck, Phillips, and Walters.  In particular, Theorem 1.9 in that paper establishes a version of our Theorem \ref{main} for groups, rather than transformation groups.
The idea of the proof of Theorem 1.9 in \cite{ELPW} is to use a technical theorem from \cite{going-down-functors} to reduce the question to the case of $G$ a compact group, which is much more tractable thanks to results of Echterhoff and Williams in \cite{local-inner-C_0} 
on $C_0(X)$-linear actions on continuous trace algebras.  The same  technique of reduction to the compact case works in the case of a transformation group as well, as we shall see, so we will begin by examining the case when our transformation group is compact.

The main result in this section is the following:

\begin{proposition}
Let $\omega$ be a  homotopy of  cocycles on a compact Hausdorff\\ groupoid $\G $.  Then the  map 
\[q_t: C^*_r(\G \times [0,1], \omega) \to C^*_r(\G, \omega_t)\]
 given on $C_c(\G \times [0,1])$ by $q_t(f)(x) = f(x, t)$ is a homotopy equivalence, and thus induces an isomorphism 
\[K_*(C^*_r(\G \times [0,1], \omega)) \to K_*(C^*_r(\G, \omega_t)).\] 
\label{cpt-isom}
\end{proposition}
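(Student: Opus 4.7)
The strategy is to realize $C^*_r(\mathcal{G} \times [0,1], \omega)$ as a trivial $C([0,1])$-algebra with fiber $C^*_r(\mathcal{G}, \omega_t)$, under which identification $q_t$ becomes the standard evaluation map. Since evaluation $C([0,1], A) \to A$ admits the inclusion of constants as a homotopy inverse, it is an honest homotopy equivalence, and the claimed $K$-theory isomorphism then follows at once.

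The first step is to identify the $C([0,1])$-algebra structure. By construction $\mathcal{G} \times [0,1]$ is a bundle of groupoids over $[0,1]$, with fiberwise Haar system and fiberwise cocycle $\omega_t$, so the convolution product on $C_c(\mathcal{G} \times [0,1])$ is fiberwise in $t$. Consequently $C([0,1])$ sits centrally in the completed algebra via multiplication by functions of $t$ alone, and $q_t$ is exactly the quotient by the ideal of elements vanishing at $t$.

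The second step is to use the compactness of $\mathcal{G}$ to trivialize the bundle. The natural approach is local trivialization: for each $t_0 \in [0,1]$ I would seek a neighborhood $U \ni t_0$ and a continuous function $b : \mathcal{G} \times U \to \mathbb{T}$ with $b(\cdot, t_0) \equiv 1$ whose coboundary $\delta b$ equals $\omega|_U \cdot (\omega_{t_0})\inv$ (extending $\omega_{t_0}$ constantly in $t$), so that $f \mapsto b \cdot f$ extends to a $C(U)$-linear isomorphism $C^*_r(\mathcal{G} \times U, \omega|_U) \cong C(U, C^*_r(\mathcal{G}, \omega_{t_0}))$. Compactness of $\mathcal{G}\2$ and uniform continuity of $\omega$ should make such a $b$ available for $U$ small enough; an averaging argument using the finite Haar measure on $\mathcal{G}$ seems a natural tool. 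Because $[0,1]$ is paracompact and contractible, a locally trivial $C^*$-bundle over it is automatically globally trivial, completing the argument.

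I expect the main obstacle to be the construction of the trivializing coboundary $b$. Homotopic 2-cocycles need not be cohomologous in general, so compactness of $\mathcal{G}$ must enter in an essential way at this step; without it, the continuous family of classes $[\omega_t] \in H^2(\mathcal{G}, \mathbb{T})$ could vary nontrivially in $t$ and the bundle could fail even to be locally trivial.
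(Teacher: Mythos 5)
Your proposal is correct and follows essentially the same route as the paper: the paper likewise shows that the ratio cocycle $\omega\cdot h_{t_0}\inv$ (with $h_{t_0}$ the cocycle $\omega_{t_0}$ extended constantly in $s$) admits a continuous cocycle logarithm on a small interval by uniform continuity on the compact set $\G\2\times[0,1]$, averages that logarithm against a cutoff function for the proper groupoid $\G$ to produce the trivializing coboundary, identifies the local piece with $C_0(U)\otimes C^*_r(\G,\omega_{t_0})$, and concludes via triviality of locally trivial bundles over the contractible base $[0,1]$. The averaging step you anticipate is exactly the paper's mechanism, carried out with Tu's cutoff function in place of a normalized Haar measure.
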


The proof will proceed through a series of lemmas, some of which have slightly more general hypotheses than those in the statement of Proposition \ref{cpt-isom}.

We begin with a definition.

\begin{definition} Let $\omega: \mathcal{G}\2 \to \T$ be a 2-cocycle on a groupoid $\G$. A function $\theta: \mathcal{G}\2 \to \R$ is a \emph{cocycle logarithm} for $\omega$ if
\begin{equation*}
\theta(x,yz) + \theta(y,z) = \theta(xy, z) + \theta(x,y)
\end{equation*}
and 
\[\omega(x,y)= \exp(\theta(x,y)) := \mathrm{e}^{2\pi \mathrm{i} \theta(x,y)}\]
for all $(x, y) \in G\2$.   
\end{definition}

\begin{lemma}
Let $\G$ be a compact groupoid, and let $\omega$ be a  homotopy of  cocycles on $\G$.  Given $t\in[0,1]$, we define a cocycle $h_t$ on $\G \times [0,1]$ by 
\[h_t(x, y, s) = \omega(x, y, t).\]  
For each $t \in [0,1]$, we can find $\epsilon_t > 0$ such that the cocycle $H_t$, given on\\ $\G \times \left((t-\epsilon_t, t+\epsilon_t) \cap [0,1]\right)$ by 
\[H_t(x,y,s) := \omega(x,y,s)h_t(x,y,s)\inv,\]
admits a continuous cocycle logarithm.  
\label{coh-triv}
\end{lemma}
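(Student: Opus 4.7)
The plan is to construct the cocycle logarithm locally in time, using the principal branch $\mathrm{Log}$ of the complex logarithm. The key observation is that $H_t(x,y,t) = \omega(x,y,t)\omega(x,y,t)\inv = 1$ identically in $(x,y)$. Since $\G\2$ is a closed, hence compact, subset of the compact space $\G \times \G$, and $H_t$ is continuous, uniform continuity on $\G\2 \times [0,1]$ yields an $\epsilon_t > 0$ such that, writing $I_t := (t-\epsilon_t, t+\epsilon_t) \cap [0,1]$, one has
\[H_t(\G\2 \times I_t) \ \subseteq \ U := \{\mathrm{e}^{2\pi \mathrm{i} \alpha} : \alpha \in (-1/6, 1/6)\}.\]
I would then define $\theta_t : \G\2 \times I_t \to (-1/6, 1/6)$ by $\theta_t := \tfrac{1}{2\pi \mathrm{i}}\, \mathrm{Log}\, H_t$, which is continuous because $\mathrm{Log}$ is continuous on $U$ and $H_t$ is continuous.

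To verify the cocycle logarithm identity, I would use that $H_t$ itself satisfies the multiplicative cocycle relation
\[H_t(x,yz,s)\,H_t(y,z,s) \ = \ H_t(xy,z,s)\,H_t(x,y,s)\]
on $(\G \times I_t)\2$; this follows because $H_t$ is the pointwise product of the restriction of $\omega$ and the pullback $h_t\inv$, both of which are cocycles (see Remark \ref{coh-gp}). Applying $\tfrac{1}{2\pi \mathrm{i}}\,\mathrm{Log}$ to this identity gives the desired additive cocycle identity only \emph{a priori} modulo $\Z$. However, every value of $\theta_t$ lies in $(-1/6, 1/6)$, so both sides of the putative additive relation lie in $(-1/3, 1/3)$, an interval of length strictly less than $1$. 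Two real numbers in such an interval that are congruent modulo $\Z$ must be equal, so the cocycle logarithm identity holds exactly.

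The main obstacle is precisely the branch-cut bookkeeping: the naive formula $\mathrm{Log}(zw) = \mathrm{Log}(z) + \mathrm{Log}(w)$ can fail by a nonzero integer multiple of $2\pi\mathrm{i}$, so I must shrink $\epsilon_t$ until all four factors appearing in the cocycle relation sit simultaneously close enough to $1$ to rule out such jumps. Choosing $\epsilon_t$ so that $H_t$ lands in $U$ (rather than a larger neighborhood of $1$) is exactly what makes this work, since then the resulting real values are small enough that the sum on each side of the identity stays within an interval on which $\exp$ is injective. Apart from this observation, everything reduces to the continuity of $\omega$ and the compactness of $\G\2 \times \{t\}$.
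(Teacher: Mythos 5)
Your proposal is correct and follows essentially the same route as the paper: use compactness of $\G\2$ and the fact that $H_t(\cdot,\cdot,t)\equiv 1$ to force $H_t$ into a small arc around $1$ for $s$ near $t$, take the principal branch of the logarithm there, and rule out the integer ambiguity in the additive cocycle identity because all values are uniformly small. The only cosmetic difference is your choice of the arc $\{\mathrm{e}^{2\pi\mathrm{i}\alpha}:\alpha\in(-1/6,1/6)\}$ where the paper uses the open right half-circle; both make the ``mod $\Z$'' step work.
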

\begin{proof}
Observe that $\omega$ and $h_t$ are continuous cocycles on $\G \times [0,1]$ by definition.  Moreover, Proposition 1.10 in \cite{geoff-thesis} tells us that $\G\2 \times [0,1]$ is  a closed subset of a compact space, and hence is compact.  Thus, for each $t \in [0,1]$ we can find $\epsilon_t >0$ such that $|s-t| < \epsilon_t$ implies that $H_t(x,y,s)$  lies in the right-hand half of the circle, strictly between $-i$ and $i$, for all $(x,y) \in \G\2$. This implies that  the principal branch of the logarithm is a cocycle logarithm for $H_t$, for any $t$: Let $\theta_{t}(x,y,s)$  be the argument of $H_t(x,y,s)$ that lies in the interval $(-\pi, \pi)$.  Then since the product of cocycles is a cocycle by Remark \ref{coh-gp}, the cocycle identity for $H_t$  implies that for each fixed $(x,y)\in \G\2$ and $s \in [0,1]$,
\[\theta_{t}(x,y,s) + \theta_{t}(xy, z,s) = \theta_{t}(x, yz, s) + \theta_{t}(y,z, s) + 2\pi k\]
for some $k \in \Z$. If $k =0$ then the principal branch of the logarithm is a cocycle logarithm as claimed. 
 
 In order to have $k \not= 0$ we must have that at least one of the pairs 
\[\{\theta_{t}(x,y,s), \theta_{t}(x, yz, s)\}, \{\theta_{t}(xy, z,s), \theta_{t}(y, z,s)\}\]
 has a difference of at least $\pi$.  However, since $-\pi/2 < \theta_{t}(x,y,s) < \pi/2$ for all $(x,y) \in \G\2$, this is impossible, so the principal branch of the logarithm is a cocycle logarithm as claimed. 
\end{proof}
 
When a cocycle $\omega$ admits a logarithm, we can use left invariance of the Haar system to show that in many cases, $\omega$ is cohomologous to the trivial cocycle. This will be an important ingredient in our proof, because cohomologous cocycles give rise to isomorphic $C^*$-algebras.

\begin{definition}
Let $\omega_0, \omega_1$ be two cocycles on a groupoid $\mathcal{G}$.  We say that $\omega_0, \omega_1$ are \emph{cohomologous} if there exists a function $b: \mathcal{G} \to \T$ such that for any $(x, y) \in \mathcal{G}\2$, we have 
\[\omega_0(x,y) = (\delta b) (x,y) \omega_1(x,y) := b(x) b(y) b(xy)\inv \omega_1(x,y).\]
\end{definition}

\begin{remark}
When $\G$ is a LCH groupoid and $\omega_0, \omega_1$ are continuous 2-cocycles that are cohomologous via a continuous function $b$, then a straightforward check shows  
that  the map $f \mapsto f \cdot b$ on $C_c(\mathcal{G}) $ induces an isomorphism\footnote{This isomorphism in fact holds for both the full and the reduced $C^*$-algebras.}
\[C^*_{r}(\G, \omega_0) \cong C^*_{r}(\G, \omega_1).\]
\end{remark}

The class of groupoids alluded to above, for which the existence of a cocycle logarithm for $\omega$ implies that $\omega$ is cohomologous to the trivial cocycle, is the class of \emph{proper} groupoids.

\begin{definition}
A groupoid $\G$ is \emph{proper} if the map $(r,s): \G \to \G\z \times \G\z$ is proper.
\end{definition}

\begin{remark}
Proposition 1.10 of \cite{geoff-thesis} tells us that $\G\z \subseteq \G$ is closed in any LCH groupoid.  Consequently, if $\G$ is compact then 
$\G$ is proper.
\end{remark}

\begin{example} A transformation group $G \ltimes X$ is proper iff $G$ acts properly on $X$ -- that is, iff for any $K_1, K_2 \subseteq X$ compact, 
$\{\gamma \in G: \gamma K_1 \cap K_2 \not= \emptyset\}$ is compact.  In particular, if $G$ is compact, then $G \ltimes X$ is always proper. 
\end{example}

If $\G$ is a proper groupoid, then Tu describes in Proposition 6.11 of \cite{tu-novikov} how to construct a continuous \emph{cutoff function} $c: \G\z \to \R^+$ such that for any $u \in \G\z$, 
\begin{equation}\int_{\G^u} c(s(x)) \mathrm{d}\lambda^u(x) = 1
\label{cutoff}
\end{equation} 
We will use this cutoff function to show that if a 2-cocycle $\omega$ on a proper groupoid admits a logarithm, then $\omega$ is cohomologous to the trivial cocycle.

\begin{proposition}
Let $\G$ be a proper groupoid.  Suppose that a 2-cocycle $\omega$ on $\G$ admits a  cocycle logarithm $\theta: \G\2 \to \R$.  Then $\omega$ is cohomologous to the trivial cocycle on $\G$.
\end{proposition}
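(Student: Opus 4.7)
The plan is to define $b$ as a suitable exponential of an averaged version of $\theta$, using the cutoff function $c$ to do the averaging, and then verify the coboundary relation by combining the cocycle identity for $\theta$ with the left-invariance of the Haar system.

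Concretely, I would set
\[
\beta(x) := \int_{\G^{s(x)}} \theta(x,y)\, c(s(y))\, \mathrm{d}\lambda^{s(x)}(y), \qquad b(x) := \exp(\beta(x)) = \mathrm{e}^{2\pi \mathrm{i}\, \beta(x)}.
\]
Because $\G$ is proper, Tu's construction in \cite{tu-novikov} guarantees that $c$ has the support properties needed to make the integrand compactly supported on each fiber, so $\beta$ is a well-defined (and, when $\theta$ is continuous, continuous) real-valued function on $\G$, and $b: \G \to \T$.

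To verify the coboundary identity $\omega(x_1,x_2) = b(x_1) b(x_2) b(x_1 x_2)\inv$, it suffices to show that
\[
\theta(x_1,x_2) = \beta(x_1) + \beta(x_2) - \beta(x_1 x_2) \pmod{\Z}
\]
for any $(x_1,x_2) \in \G\2$. The cocycle identity for $\theta$ reads
\[
\theta(x_1,x_2) = \theta(x_1, x_2 y) + \theta(x_2, y) - \theta(x_1 x_2, y)
\]
whenever $y \in \G^{s(x_2)}$. Multiplying by $c(s(y))$ and integrating over $\G^{s(x_2)}$ against $\lambda^{s(x_2)}$, the normalization \eqref{cutoff} applied at the unit $s(x_2)$ shows that the left-hand side integrates to $\theta(x_1,x_2)$, while the last two terms on the right integrate to $\beta(x_2) - \beta(x_1 x_2)$. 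It remains to show that
\[
\int_{\G^{s(x_2)}} \theta(x_1, x_2 y)\, c(s(y))\, \mathrm{d}\lambda^{s(x_2)}(y) = \beta(x_1).
\]
For this, note $s(x_2 y) = s(y)$, and apply left-invariance with the substitution $z = x_2 y$: the integral becomes $\int_{\G^{r(x_2)}} \theta(x_1,z)\, c(s(z))\, \mathrm{d}\lambda^{r(x_2)}(z)$, which equals $\beta(x_1)$ since $r(x_2) = s(x_1)$.

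Exponentiating then gives $\omega = \delta b$ on the nose, so $\omega$ is cohomologous to the trivial cocycle. The main conceptual step is recognizing that the cutoff function plays exactly the role of a group-theoretic averaging, allowing one to convert the telescoping in the cocycle identity into a coboundary via left-invariance; the rest is bookkeeping with the integrals.
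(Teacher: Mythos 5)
Your proposal is correct and follows essentially the same route as the paper: you define the same cutoff-weighted average $b(x) = \int_{\G^{s(x)}} \theta(x,z)\, c(s(z))\, \mathrm{d}\lambda^{s(x)}(z)$, integrate the cocycle identity for $\theta$ against $c(s(\cdot))$, and use left-invariance of the Haar system together with the normalization \eqref{cutoff} to obtain $\theta(x,y) = b(x) + b(y) - b(xy)$ exactly (so the ``$\bmod\ \Z$'' hedge is not even needed). Exponentiating gives $\omega = \delta(\exp\circ b)$, just as in the paper.
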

\begin{proof} Define 
\begin{equation*}
b(x) = \int_{\G^{s(x)}} \theta(x,z) c(s(z)) \, \mathrm{d}\lambda^{s(x)}(z).
\end{equation*}
Using the left-invariance of the Haar system, and \eqref{cutoff}, we see that whenever $(x,y) \in \G\2$,
\begin{align*}
b(x) + b(y) - b(xy) &= \int_{\G^{s(x)}} \theta(x,z) c(s(z)) \, \mathrm{d}\lambda^{s(x)}(z) \\
&\qquad + \int_{\G^{s(y)}} \theta(y,z) c(s(z)) \, \mathrm{d}\lambda^{s(y)}(z)\\
&\qquad -  \int_{\G^{s(y)}}  \theta(xy, z) c(s(z)) \, \mathrm{d}\lambda^{s(y)}(z) \\
&= \int_{\G^{s(y)}} \left(\theta(x, yz) + \theta(y,z) - \theta(xy, z) \right) c(s(z)) \, \mathrm{d}\lambda^{s(y)}(z) \\
&= \int_{\G^{s(y)}} \theta(x,y) c(s(z)) \, \mathrm{d}\lambda^{s(y)}(z) \\
&= \theta(x,y).
\end{align*}
Thus,
\begin{align*}
\omega(x,y) &= \exp(\theta(x,y)) = \exp(b(x)) \, \exp(b(y)) \, \exp(b(xy))\inv \\
&= \delta (\exp\circ b)(x,y),
\end{align*}
so $\omega$ is a coboundary, as claimed. \end{proof}

\begin{remark}
We note that since $c$ is continuous, $b$ will be continuous whenever $\theta$ is.  Consequently, if a continuous 2-cocycle $\omega$ on a LCH groupoid $\G$ admits a continuous logarithm, then $C^*_r(\G, \omega) \cong C^*_r(\G)$.
\end{remark}

With these lemmas in hand, we can now proceed with the proof of Proposition~\ref{cpt-isom}.

\begin{proof}[Proof of Proposition~\ref{cpt-isom}]
The two previous Lemmas,
 combined with our \\ earlier observation that cohomologous cocycles induce isomorphic twisted $C^*$-algebras, imply that if $\omega$ is a homotopy of continuous cocycles on a compact transformation group $\G = H \ltimes X$, then for any $t \in [0,1]$, there exists $\epsilon_t > 0$ such that
\[C^*_r(\G \times (t-\epsilon_t, t+\epsilon_t), \omega) \cong C^*_r(\G \times (t-\epsilon_t, t+\epsilon_t), h_t).\] 

We claim that $C^*_r(\G \times (t - \epsilon_t, t + \epsilon_t), h_t) \cong C_0((t -\epsilon_t, t+\epsilon_t), C^*_r(\G, \omega_t)).$  
To see this, let $\varepsilon = (t-\epsilon_t, t+\epsilon_t)$ and observe that $C_c(\varepsilon \times \G)$ is dense in both algebras.  Since the cocycle $h_t$ doesn't depend on $s \in \varepsilon$, the multiplication and involution operations on $C_c(\varepsilon \times \G)$ are the same in both algebras.  

To see that the norms agree on the two algebras, recall that in the groupoid $\G \times \varepsilon$, we have $(x,s)\inv = (x\inv, s)$.  If we write $\nu\inv$ for the measure on $\G$ induced from a full measure $\mu$ on $\G\z$ as in Definition \ref{l2}, and $\tilde{\nu}\inv$ for the measure on $\G \times \varepsilon$ induced from $\mu$ and from Lebesgue measure on $\varepsilon$, then
\begin{align*}
\int_{\G \times \varepsilon} f(x,s) \mathrm{d}\tilde{\nu}\inv(x,s) &= \int_\varepsilon \int_{\G\z} \int_{\G^u} f(x\inv, s) \mathrm{d}\lambda^u(x) \, \mathrm{d}\mu(u) \, \mathrm{d}s\\
& = \int_\varepsilon \int_{\G} f(x, s) \mathrm{d}\nu\inv(x) \mathrm{d}s.\end{align*}

In other words, $L^2(\tilde{\nu}\inv) = L^2(\varepsilon, L^2(\nu\inv)) \cong L^2(\varepsilon) \otimes L^2(\nu\inv)$.  Moreover, a quick examination of the formula for the representation $\text{Ind}\, \mu$ of $C_c(G \times \varepsilon)$ on $L^2(\tilde{\nu}\inv) \cong L^2(\varepsilon) \otimes L^2(\nu\inv)$ will show that this representation decomposes into pointwise multiplication on $L^2(\varepsilon)$ and convolution on $L^2(\nu\inv)$.  Thus, the representation $\text{Ind}\, \mu$ of the groupoid convolution algebra $C_c(\G \times \varepsilon, h_t)$ is the same as the representation of $C_c(\varepsilon) \odot C_c(\G, \omega_t)$ on $L^2(\varepsilon) \otimes L^2(\nu\inv)$ by multiplication in the first component, and twisted convolution multiplication in the second.  Since this latter representation gives rise to $C_0(\varepsilon) \otimes C^*_r(\G, \omega_t) \cong C_0(\varepsilon, C^*_r(\G, \omega_t))$, we have proved that 
\begin{equation*}
C^*_r(\G \times (t - \epsilon_t, t + \epsilon_t), h_t) \cong C_0((t -\epsilon_t, t+\epsilon_t)) \otimes C^*_r(\G, \omega_t)
\end{equation*}
as claimed.

In sum, when $\G$ is a compact groupoid, $C^*_r(\G \times [0,1], \omega)$ is a locally trivial continuous field of $C^*$-algebras over $[0,1]$, with fiber algebra $C^*_r(\G, \omega_t)$ over $t \in [0,1]$.  Consequently, the fiber algebras $C^*_r(\G, \omega_t)$ are all  isomorphic. 
Since any locally trivial fiber bundle over a contractible space is trivializable, we have 
\[C^*_r(\G \times [0,1], \omega) \cong C([0,1], C^*_r(\G, \omega_t)) \cong C([0,1]) \otimes C^*_r(\G, \omega_t).\]
Moreover, since $[0,1]$ is compact and contractible, the natural map 
\[q_t:  C^*_r(\G \times [0,1], \omega) \to C^*_r(\G, \omega_t)\]
is a homotopy equivalence, and induces an isomorphism 
\[K_*(C^*_r(\G \times [0,1], \omega)) \cong K_*(C^*_r(\G, \omega_t))\]
as claimed. 
\end{proof}

\section{TWISTED CROSSED PRODUCTS}
\label{pr}
In this section, we connect our construction of the reduced twisted transfor-\\ mation-group $C^*$-algebra from Section \ref{2} with that of Packer and Raeburn in \cite{twisted-xprod1, twisted-xprod2}.  Our eventual goal is to invoke the Packer-Raeburn ``stabilization trick'' (Theorem 3.4 in \cite{twisted-xprod1}) to show that 
\[C^*_r(G \ltimes X, \omega) \otimes \K \cong C_0(X, \K) \rtimes_r G.\]
In order to do this we need to exhibit an isomorphism between the reduced twisted groupoid $C^*$-algebra $C^*_r(G \ltimes X, \omega)$ as defined in Section \ref{2}, and the reduced twisted crossed product $C^*$-algebra $\tilde{\pi} \times R(C_0(X) \rtimes_{\alpha,  {\bf u}}G)$ described in Definition 3.10 and Remark 3.12 of \cite{twisted-xprod1}.  Since Packer and Raeburn restrict their attention in \cite{twisted-xprod1} to separable $C^*$-algebras and locally compact second countable groups, we begin here to impose those hypotheses as well, which will be in force throughout the rest of this paper.

To that end, suppose that $G \ltimes X$ is a second countable, locally compact Hausdorff transformation group, and that $\omega$ is a continuous 2-cocycle on $G \ltimes X$.  Writing $A = C_0(X)$, the action of $G$ on $X$ that underlies the transformation-group structure gives rise to a continuous action $\alpha$ of $G$ on $A$ by 
\begin{equation}
\alpha_\gamma(f)(v) := f(\gamma\inv \cdot v).\label{pr-action}
\end{equation}
Moreover, if we define ${\bf u}: G \times G \to C(X,\T) = UM(A)$ by 
\[{\bf u}(\gamma, \eta)(v) := \omega((\gamma, v),(\eta, \eta\inv \cdot v)),\]
then the fact that $\omega$ satisfies the cocycle condition \eqref{cocycle} tells us that for any $\gamma,\eta,\rho \in G$ we have 
\begin{equation}
\alpha_\gamma({\bf u}(\eta,\rho)) {\bf u}(\gamma, \eta \rho) =  {\bf u}(\gamma, \eta){\bf u}(\gamma \eta, \rho).\label{pr-cocycle}
\end{equation}
Furthermore, the continuity of $\omega$ implies that {\bf u} is also continuous, and hence Borel.
In other words, $(A, G, \alpha, {\bf u})$ is a twisted dynamical system in the sense of \cite{twisted-xprod1} Definition 2.1.

\begin{definition}[\cite{twisted-xprod1} Definition 2.1]
Let $A$ be a separable $C^*$-algebra, $G$ a locally compact second countable group, and let $\alpha: G \to \Aut A, {\bf u}: G \times G \to UM(A)$ be Borel maps.  The quadruple $(A, G, \alpha, {\bf u})$ is a \emph{twisted dynamical system} if
\bi 
\item $\alpha_e = id$ and ${\bf u}(\gamma, e) = {\bf u}(e, \gamma) = 1$ 
\item $\alpha_\gamma \circ \alpha_\eta = \Ad(u(\gamma, \eta)) \circ \alpha_{\gamma \eta}$
\item $\alpha_\gamma ({\bf u}(\eta, \rho)) {\bf u}(\gamma, \eta \rho)  =  {\bf u}(\gamma, \eta){\bf u}(\gamma \eta, \rho)$
\ei
for all $\gamma, \rho, \eta \in G$.
\end{definition}

  Conversely, if $(A, G, \alpha, {\bf u})$ is any twisted dynamical system with $A = C_0(X)$ abelian, the action  $\alpha$ of $G$ on $A$ must arise from an action of $G$ on $X$.  Then if we define $\omega:(G \ltimes X)\2 \to \T$ via 
\[\omega((\gamma, v), (\eta, \gamma\inv \cdot v)) :={\bf u}(\gamma,\eta)(v),\]
an easy check will show that $\omega$ satisfies the cocycle condition  \eqref{cocycle}.

If we additionally assume that $\alpha, {\bf u}$ are continuous, then $\alpha$ gives rise to a locally compact Hausdorff transformation group $G \ltimes X$ and the cocycle $\omega$ associated to {\bf u} is continuous.
In other words, \emph{continuous} twisted dynamical systems $(A, G, \alpha, {\bf u})$ with $A= C_0(X)$ abelian are in bijection with continuous 2-cocycles $\omega$ on the transformation group $G \ltimes X$.  Our goal in this section is to check that the corresponding reduced $C^*$-algebras $\tilde{\pi} \times R( A \times_{\alpha, {\bf u}} G ), C^*_r(G \ltimes X, \omega)$ are isomorphic.

 The reduced twisted crossed product of a twisted dynamical system\\ ($A, G, \alpha, {\bf u})$ is defined in \cite{twisted-xprod1} Remark 3.12 as the image of the universal twisted crossed product $A \times_{\alpha, {\bf u}} G$ under the representation 
 \[\tilde{\pi} \times R: A \times_{\alpha, {\bf u}} G \to B(L^2(G, \H)\]
induced from a faithful representation $\pi: A \to B(\H)$. An explicit formula for the covariant representation $(\tilde{\pi}, R)$ associated to $\pi$ is given in \cite{twisted-xprod1} Definition 3.10. However, we can also view the reduced twisted crossed product as a completion of $C_c(G, A)$, because Definition 2.4 of \cite{twisted-xprod1} implies that $C_c(G, A)$ maps densely into $A \times_{\alpha, {\bf u}} G$.  This perspective will make it easier
  to compare $C^*_r(G \ltimes X, \omega)$ and $\tilde{\pi} \times R( A \times_{\alpha, {\bf u}} G)$. 
  
   Conditions $(b)$ and $(c)$ of Definition 2.4 of \cite{twisted-xprod1}, combined with Lemma 1.92 in \cite{xprod}, tell us that if $(\pi, U)$ is a covariant representation of $(A,G, \alpha, {\bf u})$ on the Hilbert space $\H$, then the integrated form $\pi \times U: A \times_{\alpha, {\bf u}} G \to B(\H)$ is given on $C_c(G, A)$ by 
 \[\pi \times U(f) = \int_G \pi(f(\gamma)) U_\gamma \mathrm{d}\lambda(\gamma).\]
Thus, using the formulas for $(\tilde{\pi}, R)$ given in Definition 3.10 of \cite{twisted-xprod1}, we see that the reduced twisted crossed product $\tilde{\pi} \times R( A \times_{\alpha, {\bf u}} G )$ is the completion of  $C_c(G, A)$ in the norm given by the representation $\tilde{\pi} \times R$, where (for $f \in C_c(G, A),\\  \xi \in L^2(G, \H)$)
\[\tilde{\pi} \times R(f)\xi(\gamma)= \int_G \pi(\alpha_\gamma( f(\eta)) \Delta(\eta)^{1/2} {\bf u}(\gamma, \eta))\xi(\gamma\eta) \, \mathrm{d}\lambda(\eta).\]
Here $\Delta: G \to \R^+$ denotes the modular function of $G$.

When we specialize to our case of interest (namely, continuous twisted dynamical systems with $A = C_0(X)$), using the faithful representation $\pi$ of $C_0(X)$ on $L^2(X, \mu)$ as multiplication operators for some full measure $\mu$ on $X$, the formula above becomes 
\begin{equation}
\tilde{\pi} \times R(f)\xi(\gamma, v)= \int_G f(\eta, \gamma\inv v) \Delta(\eta)^{1/2} \omega( (\gamma,v), (\eta, \gamma\inv v)) \xi(\gamma\eta, v) \, \mathrm{d}\lambda(\eta).
\label{red-xprod}
\end{equation}

We now check that there is a convolution structure on $C_c(G, C_0(X))$ such that the $*$-algebras $C_c(G \ltimes X, \omega)$ and $C_c(G, C_0(X))$ are isomorphic, and that the reduced norms gives rise to an isomorphism of $C^*$-algebras 
\[C^*_r(G \ltimes X, \omega) \cong\tilde{\pi} \times R( A \times_{\alpha, {\bf u}} G).\]
 While this result may be well-known to experts, we include a proof here because we have not found a satisfactory reference to it in the literature.
\begin{proposition}
Let $G \ltimes X$ be a second countable, locally compact Hausdorff transformation group, and let $\omega$ be a continuous 2-cocycle on $G \ltimes X$.  Fix a full measure $\mu$ on $X$.  There is an associated twisted dynamical system $(C_0(X), G, \alpha, {\bf u})$ such that $C^*_r(G \ltimes X, \omega)$ is isomorphic to the reduced twisted crossed product $\tilde{\pi} \times R(C_0(X) \rtimes_{\alpha,  {\bf u}}G)$ described in Remark 3.12 of \cite{twisted-xprod1}, where $\pi: C_0(X) \to B(L^2(X, \mu))$ is the representation of $C_0(X)$ on $L^2(X, \mu)$  by multiplication operators.
\label{pr&gpoid}
\end{proposition}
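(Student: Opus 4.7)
The plan is to exhibit an explicit $*$-isomorphism $\phi\colon C_c(G\ltimes X, \omega) \to C_c(G, C_0(X))$ intertwining the groupoid convolution structure with the twisted-crossed-product $*$-algebra structure, and then to construct a unitary on the underlying Hilbert spaces that intertwines the representations $\mathrm{Ind}\,\mu$ and $\tilde{\pi}\times R$, thereby yielding $\|\phi(f)\|_r = \|f\|_r$.

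Motivated by the untwisted example earlier in the paper, the candidate map is
\[\phi(f)(\gamma)(v) := \Delta(\gamma)^{-1/2}\, f(\gamma, v),\]
where $\Delta$ is the modular function of $G$. Both sides identify as vector spaces with $C_c(G\times X)$, so $\phi$ is automatically a linear bijection. A direct substitution into the convolution formula \eqref{conv} and its twisted-crossed-product analogue, combined with $\Delta(\eta)^{-1/2}\Delta(\eta^{-1}\gamma)^{-1/2} = \Delta(\gamma)^{-1/2}$ and the identification of the groupoid cocycle factor $\omega((\eta, u),(\eta^{-1}\gamma,\eta^{-1}u))$ with ${\bf u}(\eta, \eta^{-1}\gamma)(u)$, shows that $\phi$ preserves convolution; an analogous calculation using \eqref{cocycle}, \eqref{units}, and $\Delta(\gamma^{-1})=\Delta(\gamma)^{-1}$ shows that $\phi$ preserves the involutions.

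For the norm equivalence, by Definition \ref{l2} the pullback along inversion $\xi \mapsto \xi\circ\mathrm{inv}$ is an isometric bijection from $L^2(\G, \nu^{-1})$ onto $L^2(G\times X, \lambda\otimes\mu) \cong L^2(G, L^2(X,\mu))$. Conjugating $\mathrm{Ind}\,\mu(f)$ by this isometry, substituting $\eta\mapsto\gamma\rho$ via left invariance of $\lambda$, and applying the cocycle identity \eqref{cocycle} to the composable triple $\bigl((\gamma,u),(\rho,\gamma^{-1}u),((\gamma\rho)^{-1},(\gamma\rho)^{-1}u)\bigr)$, whose iterated product is a unit, reproduces the formula \eqref{red-xprod} for $\tilde{\pi}\times R(\phi(f))$ up to the ratio
\[\frac{\omega((\gamma,u),(\gamma^{-1},\gamma^{-1}u))}{\omega((\gamma\rho,u),((\gamma\rho)^{-1},(\gamma\rho)^{-1}u))}.\]
This ratio is absorbed by modifying the unitary to
\[(U\xi)(\gamma, v) := \omega\bigl((\gamma, v),(\gamma^{-1},\gamma^{-1}v)\bigr)\,\xi(\gamma^{-1},\gamma^{-1}v),\]
which remains unitary since $|\omega|=1$; with this choice $U\,\mathrm{Ind}\,\mu(f)\,U^{-1} = \tilde{\pi}\times R(\phi(f))$, and hence $\|\phi(f)\|_r = \|f\|_r$, so $\phi$ extends to the desired $C^*$-isomorphism.

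The main obstacle is bookkeeping: keeping track of inversions on the $G$-coordinate, the precise evaluation points of $\omega$ inside its two arguments, and the modular-function factors appearing in both the convolution and the adjoint. All the identities needed are immediate consequences of left invariance of $\lambda$, the cocycle relation \eqref{cocycle}, the normalization \eqref{units}, and the unitarity of $\omega$; no further hypotheses beyond the standing second-countable, locally compact Hausdorff assumption are required.
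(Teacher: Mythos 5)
Your proposal is correct and follows essentially the same route as the paper: the same rescaling map $\phi(f)(\gamma,v)=\Delta(\gamma)^{-1/2}f(\gamma,v)$ (the paper writes $\Delta(\gamma^{-1})^{1/2}$, which is the same thing) and the same unitary $U\xi(\gamma,v)=\omega((\gamma,v),(\gamma^{-1},\gamma^{-1}v))\,\xi(\gamma^{-1},\gamma^{-1}v)$ intertwining $\mathrm{Ind}\,\mu$ with $\tilde{\pi}\times R$. You simply spell out the cocycle and modular-function bookkeeping that the paper dismisses as a ``straightforward check.''
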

\begin{proof}
We begin by observing that $C_c(G, C_0(X))$ forms a subalgebra of\\ $\tilde{\pi} \times R(C_0(X) \rtimes_{\alpha,  {\bf u}}G)$, not merely a dense subspace:
if $f, g \in C_c(G, C_0(X))$ then 
 \[f * g(\gamma, v) := \int f(\eta,v) g(\eta\inv \gamma, \eta\inv\cdot v) \omega((\eta,v), (\eta\inv \gamma, \eta\inv \cdot v))\, \mathrm{d}\lambda(\gamma)\]
\[ f^*(\gamma,v) := \overline{f(\gamma\inv, \gamma\inv \cdot v) \Delta(\gamma\inv) \omega((\gamma\inv, \gamma\inv \cdot v), (\gamma,v))}\]
are also elements of $C_c(G, C_0(X))$, and one can check, using \eqref{red-xprod}, that the $*$-algebra structure thus defined is preserved by the representation $\tilde{\pi} \times R$.

Using this $*$-algebraic structure on $C_c(G, C_0(X))$, we now define a $*$-homo-\\ morphism $\phi: C_c(G \ltimes X, \omega) \to C_c(G, C_0(X))$ and a unitary 
\[U: L^2(G \ltimes X, \nu\inv) \to L^2(G, L^2(X)),\] that intertwine the representations $\tilde{\pi} \times R$ and $\text{Ind}\, \mu$.  That is,  for any $f \in C_c(G \ltimes X, \omega), \xi \in L^2(G, L^2(X))$, we will show that 
\begin{equation}
\tilde{\pi} \times R(\phi(f))(\xi) = U \, \text{Ind}\,\mu(f)(U^*\xi).
\label{intertwine}
\end{equation}
It follows that $\phi$ is norm-preserving, and therefore extends to a $*$-homomorphism $C^*_r(G\ltimes X, \omega) \to \tilde{\pi} \times R(C_0(X) \rtimes_{\alpha,  u}G)$ that implements the desired isomorphism.

The appearance of the modular function in the involution in $C_c(G, C_0(X)) \subseteq   \tilde{\pi} \times R(C_0(X) \rtimes_{\alpha,  u}G)$ above means that the standard inclusion $C_c(G \times X) \hookrightarrow C_c(G, C_0(X))$ will not be a $*$-homomorphism.  Instead, we define $\phi: C_c(G \ltimes X, \omega) \to C_c(G, C_0(X))$  by 
\[\phi(f)(\gamma, v) := f(\gamma, v) \Delta(\gamma\inv)^{1/2};\]
a straightforward check shows that $\phi$ is multiplicative and $*$-preserving.

Defining $U: L^2(G \ltimes X, \nu\inv) \to L^2(G, L^2(X))$ by 
\begin{align*}
U\xi(x) &=  \xi(x\inv) \omega(x, x\inv) \\
&= \xi(\gamma\inv, \gamma \inv \cdot v) \omega((\gamma, v), (\gamma\inv, \gamma\inv \cdot v)) 
\end{align*}
if $x = (\gamma, v) \in G \ltimes X$,
a similarly straightforward check (invoking the cocycle condition \eqref{cocycle}) shows that \eqref{intertwine} holds, and moreover that $U$ is a unitary operator.  It follows (as remarked above) that
\[C^*_r(G \ltimes X, \omega) \cong \tilde{\pi} \times R(C_0(X) \rtimes_{\alpha,  {\bf u}}G)\] as desired. 
\end{proof}

\section{THE MAIN THEOREM}
\label{6}

We are now ready to tackle the proof of Theorem \ref{main}.  Our proof closely parallels the proof given in the group case (Theorem 1.9 in \cite{ELPW}) by Echterhoff, L\"uck, Phillips, and Walters; thus, our first step is to find an element 
\[\textbf{x} \in KK^G(C_0(X \times [0,1],\K),C_0(X,\K))\]
 in the $G$-equivariant $KK$-theory of $A:= C_0(X \times [0,1], \K)$ and $B:= C_0(X,\K)$, such that if $H \leq G$ is compact, and we write $\textbf{x}^H$ for the element $\textbf{x}$ thought of as merely an $H$-equivariant $KK$-element, the map
\[\underline{ \quad} \# \textbf{x}^H: KK^H(\C, A) \to KK^H(\C, B)\]
given by taking the Kasparov product with $\textbf{x}^H$ is an isomorphism for any $H \leq G$ compact.  We will do this by showing that $\textbf{x}$ corresponds to the element 
\[[q_t]\in KK(C^*_r(G \ltimes X \times [0,1], \omega), C^*_r(G \ltimes X, \omega_t))\] arising from the $*$-homomorphism $q_t:  C^*_r(G \ltimes X \times [0,1], \omega) \to C^*_r(G \ltimes X, \omega_t)$ of ``evaluation at $t \in [0,1]$,'' since we know from Proposition \ref{cpt-isom} that $q_t$ induces a homotopy equivalence, and hence a $KK$-equivalence, when the transformation group $G \ltimes X$ is compact.  Thus, if $X$ is compact, $\underline{ \quad} \# \textbf{x}^H$ is an isomorphism for any compact subgroup $H \leq G$.  In other words, when $X$ is compact, $\textbf{x}$ satisfies the hypotheses of Proposition 1.6 of \cite{ELPW}, so it follows that $\textbf{x}$ (and thus $[q_t]$) gives rise to an isomorphism on $K$-theory, as claimed.

When $X$ is not compact, for each $H \leq G$ compact we can write $C_0(X, \K) = \varinjlim C_0(Y_{i,H}, \K)$ as an inductive limit, where $Y_{i,H}$ is a pre-compact $H$-invariant subspace of $X$ such that $\overline{Y_{i,H}}$ is also $H$-invariant.  We then use the $K$-theoretic 6-term exact sequence, the continuity of $K$-theory, and   the result established in the case of a compact base space to see that $[q_t]$ also gives rise to an isomorphism 
\begin{equation*}
[q_t]: K_*(C^*_r( G \ltimes X \times [0,1], \omega)) \to K_*(C^*_r(G \ltimes X, \omega_t))
\end{equation*}
 when $X$ is not compact.  (We thank the anonymous referee for suggesting this line of argument to us.)

\subsection{$KK$-theory}
We begin by reviewing a few fundamental constructions in equivariant $KK$-theory.  
First, recall that if two separable $C^*$-algebras $A, B$ admit actions by a second countable locally compact Hausdorff group $G$, then an element of $KK^G(A,B)$ is given by an equivalence class of $KK$-triples $(\mathcal{E}, T, F)$, where $\mathcal{E}$ is a $\Z/2\Z$-graded right Hilbert $B$-module admitting an action of $G$; $T: A \to L(\mathcal{E})$ is a graded $*$-homomorphism from $A$ into the bounded adjointable operators on $\mathcal{E}$; $F \in L(\mathcal{E})$ is a degree-1 operator; and the module operations on $\mathcal{E}$ and the operators $T,F$ are all $G$-equivariant.

In particular, \cite{blackadar} Examples 17.1.2(a) tells us that if $\psi: A \to B$ is  a $G$-equivariant $*$-homomorphism, then $\psi$ gives rise to a $KK$-triple $(B, \psi, 0)$ and hence to an element $[(B, \psi, 0)] \in KK^G(A, B)$. Almost all of the $KK$-elements that will concern us in this paper will be of this form. We will often denote $[(B, \psi, 0)]$ simply by $[\psi]$.

Given a $G$-algebra $A$, we can also use $KK$-theory to define  the \emph{topological $K$-theory of $G$ with coefficients in $A$} as the abelian group
\[K^{top}_*(G; A) = \lim_{L \subseteq \mathcal{E}G} KK^G_*(C_0(L), A),\]
where the limit is taken over all $G$-compact subspaces $L$ of a universal proper $G$-space $\mathcal{E}G$.  There is a natural map (see \cite{baum-connes-higson} Section 9) from the topological $K$-theory of $G$ with coefficients in $A$ to the usual $K$-theory group of the reduced crossed product $A \rtimes_r G$:
\[\mu: K^{top}_*(G; A) \to K_*(A \rtimes_r G).\]
The map $\mu$ is called the \emph{assembly map}, and $G$ is said to satisfy the \emph{Baum-Connes conjecture with coefficients} if $\mu$ is an isomorphism for any $G$-algebra $A$.  As mentioned in Remark \ref{b-c}, every a-T-menable group satisfies the Baum-Connes conjecture with coefficients.

Following \cite{xprod} Sections 2.2 and 7.2, 
 given a $G$-algebra $A$  and a faithful representation $\pi$ of $A$ on a Hilbert space $\H$, we define the \emph{reduced crossed product $A \rtimes_r G$} as the completion of the convolution algebra $C_c(G, A)$ in the norm coming from the  representation $\text{Ind}_e^G \pi$ of $C_c(G, A)$ on $L^2(G) \otimes \H$ induced from $\pi$.  Lemma 7.8
 in \cite{xprod} tells us that the reduced crossed product does not depend on the choice of faithful representation $\pi$.
 
 We will also have occasion to consider the \emph{full crossed product} $A \rtimes G$, which is defined in Lemma 2.27 of \cite{xprod}
 as a universal object for covariant representations of the dynamical system $(A, G)$.  Being a universal object, the full crossed product behaves well with respect to functorial constructions such as inductive limits; moreover, for amenable groups $G$, $A \rtimes G \cong A \rtimes_r G$.  We will take particular advantage of this ability to use the full and reduced crossed products interchangeably in the case when $G$ is compact.

For $A, B$ as above, we write $j^G: KK^G(A, B) \to KK(A \rtimes_r G, B \rtimes_r G)$ for the \emph{descent map} defined in \cite{kasparov-equivKK} Theorem 3.11. 
The descent homomorphism is functorial, and generalizes to $KK$-theory the natural map
\[\Hom_G(A,B) \to \Hom(A \rtimes_r G, B \rtimes_r G)\] 
in the category of $C^*$-algebras and (equivariant) $*$-homomorphisms.  
In other words, if $\phi: A \to B$ is a $*$-homomorphism, then $j^G([\phi]) = [\phi^G]$, where $\phi^G: A \rtimes_r G \to B \rtimes_r G$ is the $*$-homomorphism given on the dense $*$-subalgebra $C_c(G, A)$ by $\phi^G(f)(\gamma) = \phi(f(\gamma))$.

The deepest and most useful aspect of $KK$-theory is the \emph{Kasparov product} $\#$, which is a functorial map 
\[\#: KK^G_i(A,B) \times KK^G_j(B, D) \to KK^G_{i+j}(A,D),\]
where $i,j \in \{ 0,1\}$.
See \cite{kasparov-equivKK} Theorem 2.11 for more details.

Finally, Theorem 5.4 in \cite{kasparov-skandalis} tells us how to construct an element $\Lambda_{pt} \in KK_*(\C, C^*_r(G))$ such that if $G$ is compact and $B$ is a $C^*$-algebra with a $G$-action, then the map $KS: KK^G_*(\C, B) \to KK_*(\C, B \rtimes_r G)$ given by 
\[KS(x) = \Lambda_{pt} \# j^G(x)\]
is an isomorphism. 

\subsection{Proof of Theorem \ref{main}} 
Having reviewed the preliminaries, we now begin the process of finding the  element $\textbf{x} \in KK^G(A,B)$ alluded to at the beginning of this section, where $A = C_0(X \times [0,1], \K), \ B = C_0(X, \K)$ for a  $G$-space $X$.

Let $\omega$ be a homotopy of cocycles on $G\ltimes X$, and write $ev_t: A \to B$ for the $*$-homomorphism  $ev_t(f)(x) = f(x,t)$.  Proposition \ref{pr&gpoid} tells us how to construct  twisted dynamical systems 
\[(C_0(X \times [0,1]), G, \alpha, {\bf u}), (C_0(X), G, \alpha, {\bf u}_t)\] such that 
\begin{align*}
C^*_r(G \ltimes X \times [0,1], \omega) &\cong \tilde{\pi} \times R(C_0(X \times [0,1]) \rtimes_{\alpha,  {\bf u}}G);\\
C^*_r(G \ltimes X, \omega_t) &\cong \tilde{\pi} \times R(C_0(X) \rtimes_{\alpha, {\bf u}_t}G).
\end{align*}

Now, Theorems 3.4 and 3.11 of \cite{twisted-xprod1}
tell us how to construct, from the above twisted dynamical systems, actions $\beta, \beta_t$ of $G$ on $A, B$ respectively such that 
\begin{align*}
\tilde{\pi} \times R(C_0(X \times [0,1]) \rtimes_{\alpha,  {\bf u}}G) \otimes \K &\cong A \rtimes_{\beta,r} G, \\
\tilde{\pi} \times R(C_0(X) \rtimes_{\alpha,  {\bf u}_t}G) \otimes \K &\cong B \rtimes_{\beta_t, r} G.
\end{align*}
Moreover, examining the formula for $\beta$ given by Equation 3.1 in \cite{twisted-xprod1} in this particular case reveals immediately that $\beta$ 
preserves the fibers over $[0,1]$.  In other words, $ev_t$ is equivariant, 
 and consequently  induces a $*$-homomorphism 
 \[ev_t^G: A \rtimes_{r, \beta} G \to B \rtimes_{r, \beta_t} G,\] which is given on the dense $*$-subalgebra $C_c(G \times X \times [0,1],\K)$ by
\[ev_t^G(f)(\gamma, u) := f(\gamma, u, t).\]

Since the descent map $j^G: KK^G(A,B) \to KK(A \rtimes_r G, B \rtimes_r G)$ generalizes the map $\Hom_G(A,B) \to \Hom (A\rtimes_r G, B \rtimes_r G)$, it follows that 
\[j^G([ev_t]) = [ev_t^G].\]

We can now complete the proof of our main theorem.
\begin{theorem} 
\label{main}
Let $G \ltimes X$ be a second countable locally compact Hausdorff transformation group  such that the group $G$ satisfies the Baum-Connes conjecture with coefficients, and let $\omega$ be a homotopy of continuous cocycles on $G \ltimes X$.  For any $t \in [0,1]$,  the $*$-homomorphism \[q_t: C^*_r(G \ltimes X \times [0,1], \omega) \to C^*_r(G \ltimes X, \omega_t),\]
given on $C_c(G \ltimes X \times [0,1])$ by evaluation at $t \in [0,1]$, induces an isomorphism 
\[K_*(C^*_r(G \ltimes X \times [0,1], \omega)) \cong K_*(C^*_r(G \ltimes X, \omega_t)).\] 
\end{theorem}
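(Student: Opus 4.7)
The plan is to apply Proposition 1.6 of \cite{ELPW} to a $KK^G$-element $\mathbf{x}$ built from the evaluation homomorphism, after first translating the question into one about ordinary crossed products via Proposition \ref{pr&gpoid}. Explicitly, Proposition \ref{pr&gpoid} presents $C^*_r(G \ltimes X \times [0,1], \omega)$ and $C^*_r(G \ltimes X, \omega_t)$ as reduced twisted crossed products, and the Packer--Raeburn stabilization trick (Theorems 3.4 and 3.11 of \cite{twisted-xprod1}) then yields ordinary $G$-actions $\beta, \beta_t$ on $A := C_0(X \times [0,1], \K)$ and $B := C_0(X, \K)$ satisfying
\[C^*_r(G \ltimes X \times [0,1], \omega) \otimes \K \cong A \rtimes_{\beta, r} G, \qquad C^*_r(G \ltimes X, \omega_t) \otimes \K \cong B \rtimes_{\beta_t, r} G.\]
Inspection of the explicit formula for $\beta$ shows that $\beta$ preserves the fibres over $[0,1]$, so that evaluation $ev_t : A \to B$ is $G$-equivariant and its descent $ev_t^G = j^G([ev_t])$ corresponds, under these isomorphisms, to $q_t \otimes \mathrm{id}_\K$. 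Since $K$-theory is stable, it suffices to show that $[ev_t^G]$ induces an isomorphism $K_*(A \rtimes_{\beta, r} G) \cong K_*(B \rtimes_{\beta_t, r} G)$.

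Set $\mathbf{x} := [ev_t] \in KK^G(A,B)$. The core step is to verify that for every compact subgroup $H \leq G$, the Kasparov product $\_ \# \mathbf{x}^H \colon KK^H(\C, A) \to KK^H(\C, B)$ is an isomorphism, under the temporary hypothesis that $X$ is compact; Proposition 1.6 of \cite{ELPW} will then upgrade this to a statement about $\mathbf{x}$ itself, whose descent gives the desired $K$-theoretic isomorphism. Through the Kasparov--Skandalis map $KS \colon KK^H(\C, D) \xrightarrow{\cong} K_*(D \rtimes_r H)$ (valid since $H$ is compact), together with the naturality identity $KS(\_ \# \mathbf{x}^H) = KS(\_) \# j^H(\mathbf{x}^H)$, this reduces to showing that $[ev_t^H] = j^H(\mathbf{x}^H)$ induces a $K$-theory isomorphism between $A \rtimes_{\beta, r} H$ and $B \rtimes_{\beta_t, r} H$. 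Applying Proposition \ref{pr&gpoid} once more, now to the restricted $H$-action, this assertion is exactly that the evaluation $C^*_r(H \ltimes X \times [0,1], \omega) \to C^*_r(H \ltimes X, \omega_t)$ induces an isomorphism on $K$-theory; but $H$ and $X$ are both compact, so the groupoid $H \ltimes X$ is compact and Proposition \ref{cpt-isom} delivers precisely this homotopy equivalence.

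To remove the hypothesis that $X$ is compact, for each compact $H \leq G$ I would write $X = \bigcup_i Y_{i,H}$ as an increasing union of $H$-invariant precompact open subsets whose closures are also $H$-invariant, which is possible because averaging any exhaustion of $X$ by precompact opens against normalized Haar measure on the compact group $H$ produces an $H$-invariant exhaustion. Then $C_0(X, \K) = \varinjlim C_0(Y_{i,H}, \K)$ as $H$-$C^*$-algebras, and similarly for $X \times [0,1]$; since $H$ is amenable (compact) the full and reduced crossed products coincide, and the full crossed product commutes with inductive limits, so continuity of $K$-theory and the six-term exact sequences associated to the closed ideals $C_0(Y_{i,H}) \subseteq C_0(\overline{Y_{i,H}})$ transport the already-established compact case to the general case. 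Proposition 1.6 of \cite{ELPW} and descent then finish the argument.

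The main obstacle I anticipate is the bookkeeping in the non-compact base-space step: verifying that the Packer--Raeburn stabilizations and evaluation maps really do assemble into compatible $H$-equivariant inductive systems for every compact $H \leq G$, and that the six-term exact sequence argument genuinely propagates the compact-$X$ $K$-theory isomorphism through the limit in a way that meshes with Proposition 1.6 of \cite{ELPW}. The cocycle-level checks in Section \ref{pr} and the requirement to handle \emph{all} compact subgroups uniformly are where the technical weight of the proof is concentrated; once the compact-groupoid input from Proposition \ref{cpt-isom} is fed into the $KK$-theoretic machinery, the rest is a careful (but essentially formal) unwinding.
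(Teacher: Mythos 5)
Your proposal is correct and follows essentially the same route as the paper: Packer--Raeburn stabilization plus Proposition \ref{pr&gpoid} to pass to ordinary crossed products, the Kasparov--Skandalis isomorphism and descent to reduce to compact subgroups $H$, Proposition \ref{cpt-isom} for the compact-$X$ case, and the $H$-invariant precompact exhaustion with six-term exact sequences and continuity of $K$-theory for general $X$, all fed into Proposition 1.6 of \cite{ELPW}. The only point to make fully explicit in the non-compact step is that the six-term sequence argument needs the compact-groupoid result applied to both $H \ltimes \overline{Y_{i,H}}$ and $H \ltimes \partial Y_{i,H}$ before the five-lemma yields the isomorphism for $H \ltimes Y_{i,H}$, which is exactly how the paper proceeds.
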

\begin{proof}
We begin by examining the case when $H \leq G$ is a compact subgroup. 
 We will show that the diagram
\begin{equation}
\xymatrix{
 KK^H_*(\C, A)  \ar[r]^{\# [ev_t]} \ar[d]^{KS} & KK^H_*(\C, B) \ar[d]^{KS}\\
KK_*(\C, A \rtimes_{\beta,r} H) \ar[d]^{\Phi} \ar[r]^{\# [ev_t^H]} & KK_*(\C, B \rtimes_{\beta_t,r}H) \ar[d]^{\Phi_t}\\
KK_*(\C, C^*_r(H \ltimes X \times [0,1], \omega))  \ar[r]^{\# [q_t]} & KK_*(\C, C^*_r(H \ltimes X, \omega_t)) 
}
\label{diagram}
\end{equation}
commutes, where $A = C_0(X \times [0,1], \K), B = C_0(X, \K)$ as above. 
Here 
 $\Phi, \Phi_t$ denote the $KK$-equivalences induced by the $C^*$-algebraic isomorphisms 
\begin{align}
\phi: A \rtimes_{\beta,r} H & \cong C^*_r(H \ltimes X \times [0,1], \omega) \otimes \K,\\
\phi_t: B \rtimes_{\beta_t,r} H & \cong C^*_r(H \ltimes X, \omega_t) \otimes \K \label{phi}
\end{align}
provided by Theorems 3.4 and 3.11 of \cite{twisted-xprod1}
and Proposition \ref{pr&gpoid} of the present paper. 


The functoriality of the descent map $j^H$ 
tells us that if $\textbf{x} \in KK^H_*(\C, A)$, 
we have\footnote{This equation holds regardless of whether $H$ is  compact.} 
\[ j^H(\textbf{x}) \# j^H([ev_t]) = j^H(\textbf{x} \# [ev_t]) \in KK_*(C^*_r(H), B\rtimes_{\beta_t, r} H);\]
consequently, 
\begin{align*}
KS(\textbf{x} \# [ev_t]) & = \Lambda_{pt} \# j^H(\textbf{x} \# [ev_t]) = \Lambda_{pt} \# j^H(\textbf{x}) \# j^H([ev_t]) \\
& = \Lambda_{pt} \# j^H(\textbf{x}) \# [ev_t^H] \\
& = KS(\textbf{x}) \# [ev_t^H].
\end{align*}
In other words, the top square of the diagram commutes.

To see that the bottom square of the diagram commutes, since all of the maps in question come from $*$-isomorphisms, it suffices to check that 
\begin{equation}
(q_t \otimes \text{id}) \circ \phi = \phi_t \circ ev_t^H .\label{commute}
\end{equation}
Examining the formula for the actions $\beta, \beta_t$ given in Equation 3.1 of \cite{twisted-xprod1}, and the isomorphisms of our Proposition \ref{pr&gpoid} and Corollary 3.7 in \cite{twisted-xprod1}, we see that these all preserve the fiber over $t \in [0,1]$.  
Consequently, we have the desired equality \eqref{commute}, and the bottom square of \eqref{diagram} commutes as well.


We now specialize further, to the case when $X$ is compact.  In this case, the transformation groups $H \ltimes X \times [0,1], H \ltimes X$ are compact, so we know by Proposition \ref{cpt-isom} that $q_t: C^*_r(H \ltimes X \times [0,1], \omega) \to C^*_r(H \ltimes X, \omega_t)$ is a homotopy equivalence, and hence induces an isomorphism on $KK$-theory.  It follows from the commutativity of \eqref{diagram} that taking the Kasparov product with $[ev_t]$ induces an isomorphism  
\[KK^H_*(\C, A) \cong KK_*^H(\C, B)\]
 whenever $H$ and $X$ are compact.
In other words, the element  $[ev_t] \in KK^G_*(A,B)$ satisfies the hypotheses of Proposition 1.6 in \cite{ELPW}.  The conclusion of that proposition tells us that $ev_t$ induces an isomorphism 
\[K^{top}_*(G; A) \to K^{top}_*(G; B),\]
and since we hypothesized that $G$ satisfies the Baum-Connes conjecture with coefficients, we moreover have 
\[K_*(A \rtimes_{\beta, r} G) \cong K_*(B\rtimes_{\beta_t, r} G).\]
The isomorphism of \eqref{phi} now tells us that for any $t \in [0,1]$, we have 
\[K_*(C^*_r(G \ltimes X \times [0,1], \omega)) \cong K_*(C^*_r(G \ltimes X, \omega_t)).\]
It remains to check that this isomorphism is implemented by $q_t$, as claimed.

As described in Section 9 of \cite{baum-connes-higson}, the Baum-Connes assembly map $\mu$ is a modification of the descent map $j^G$.  Since we know that $j^G(ev_t) = ev_t^G$, it follows that the isomorphism induced by $ev_t$ on 
\[K_*(C^*_r(G \ltimes X \times [0,1], \omega)) \cong K_*(C^*_r(G \ltimes X, \omega_t)),\]
which we obtain by composing  $\mu$ with the $K$-theoretic isomorphism $\Phi_t$ induced by the $*$-isomorphism $\phi_t$ of \eqref{phi},
is also given by evaluation at $t$ on $C_c(G \ltimes X \times [0,1]) \subseteq C^*_r(G \ltimes X \times [0,1], \omega)$.
 This finishes the proof of Theorem \ref{main} in the case when $X$ is compact.

We now consider the case when $X$ is merely locally compact.  (We thank the referee for suggesting the following line of argument, and Siegfried Echterhoff for pointing out a flaw in our original implementation of it.)   Let $\omega$ be a homotopy of 2-cocycles on $G \ltimes X$, and let $H \leq G$ be compact.  
We can write 
\begin{equation*}
C_0(X \times [0,1], \K) = \varinjlim C_0(Y_i \times [0,1], \K),
\end{equation*}
where each $Y_i \times [0,1] \subseteq X \times [0,1]$ is an open pre-compact $H$-invariant subspace.  

Recall that the action $\beta$ of $H$ on $C_0(X \times [0,1], \K)$ which is associated to the 2-cocycle $\omega$ preserves the fiber over $t \in [0,1]$, so each subspace $Y_i \subseteq X$ is invariant under each action $\beta_t$ of $H$.
Moreover, (full) crossed products commute with inductive limits, so the isomorphisms
\begin{align*}
C^*_r(H \ltimes Y_i \times [0,1], \omega) \otimes \K & \cong C_0(Y_i \times [0,1], \K) \rtimes_{\beta, r} H \\ 
C^*_r(H \ltimes Y_i, \omega_t) \otimes \K & \cong C_0(Y_i, \K) \rtimes_{\beta_t,r} H 
\end{align*} 
of equation \eqref{phi} combine with the fact that for compact groups $H$, the full and reduced crossed products are isomorphic, to give us isomorphisms
\begin{align}
C_0(X \times [0,1], \K) \rtimes_{\beta} H &= \varinjlim C_0(Y_i \times [0,1], \K) \rtimes_\beta H \\
& = \varinjlim C^*_r(H \ltimes Y_i \times [0,1], \omega) \otimes \K, \\ 
C_0(X, \K) \rtimes_{\beta_t} H & = \varinjlim C_0(Y_i, \K) \rtimes_{\beta_t} H = \varinjlim C^*_r(H \ltimes Y_i, \omega_t) \otimes \K.
\label{ind-lim}
\end{align}

We also note that the compactness of $H$ allows us to assume that  $\overline{Y_i}$, and therefore  $\partial Y_i$, are also $H$-invariant.  Thus, for each $i$, we have a short exact sequence of $H$-algebras 
\[0 \to C_0(Y_i \times [0,1], \K) \to C_0(\overline{Y_i} \times [0,1], \K) \to C_0(\partial Y_i \times [0,1], \K) \to 0,\]
which translates into the short exact sequence 
\begin{equation}
\xymatrix{
0  \ar[r] &  C^*_r(H \ltimes Y_i \times [0,1], \omega) \otimes \K \ar[r] &  C^*_r(H \ltimes \overline{Y_i} \times [0,1], \omega) \otimes \K \ar[dl] \\
 &  C^*_r(H \ltimes \partial Y_i \times [0,1], \omega) \otimes \K \ar[r] & 0 }
\end{equation}
thanks to the isomorphism of \eqref{phi}, the fact that the (full) crossed product preserves exact sequences, and the fact that the full and reduced crossed product are isomorphic for a compact group such as $H$.  This short exact sequence gives us a 6-term exact sequence in $K$-theory:
\begin{equation}
 \begin{tikzpicture}
\node (F-0) at (-3,-1){$K_0(C^*_r(H \ltimes Y_i \times [0,1], \omega))$};
\node (F1+F2-0) at (-3,0){$K_0(C^*_r(H \ltimes \overline{Y_i} \times [0,1], \omega))$};
\node (F1F2-0) at (-3, 1){$K_0( C^*_r(H \ltimes \partial Y_i \times [0,1], \omega))$};
\node (F-1) at (3, 1){$K_1(C^*_r(H \ltimes Y_i \times [0,1], \omega) )$};
\node(F1+F2-1) at (3,0){$K_1(C^*_r(H \ltimes \overline{Y_i} \times [0,1], \omega))$};
\node(F1F2-1) at (3,-1){$K_1( C^*_r(H \ltimes \partial Y_i \times [0,1], \omega))$};
\begin{scope}[->,  line width=0.7]
			\draw  (F-0) to  (F1+F2-0); 
			\draw (F1+F2-0) to  (F1F2-0);
			\draw (F1F2-0) to (F-1);
			\draw (F-1) to (F1+F2-1);
			\draw (F1+F2-1) to (F1F2-1);
			\draw (F1F2-1) to (F-0);
\end{scope}
\end{tikzpicture} 
\label{kthy-ind-lim}
\end{equation}
Note that we also have an analogous short exact sequence
\[0 \to C^*_r(H \ltimes Y_i, \omega_t) \otimes \K \to C^*_r(H \ltimes \overline{Y_i}, \omega_t) \otimes \K \to C^*_r(H \ltimes \partial Y_i, \omega_t) \otimes \K,\]
and thus an analogous 6-term exact sequence in $K$-theory, for each $t \in [0,1]$.

Since $H \ltimes \overline{Y_i}$ and $H \ltimes \partial Y_i$ are  compact transformation groups,   Proposition \ref{cpt-isom} implies  that for each $i$, the evaluation maps 
\begin{align*}
q_t: C^*_r(H \ltimes \overline{Y_i}  \times [0,1], \omega) & \to C^*_r(H \ltimes \overline{Y_i},\omega_t)\\
q_t: C^*_r(H \ltimes \partial Y_i \times[0,1], \omega) & \to C^*_r(H \ltimes \partial Y_i, \omega_t)
\end{align*}
both induce an isomorphism on $K$-theory:
\begin{align*}
 K_*(C^*_r(H \ltimes \overline{Y_i} \times [0,1], \omega)) &\cong K_*(C^*_r(H \ltimes \overline{Y_i}, \omega_t)), \\
 K_*(C^*_r(H \ltimes \partial Y_i \times [0,1], \omega)) & \cong K_*(C^*_r(H \ltimes \partial Y_i, \omega_t)) .
 \end{align*}
 Combined with the $K$-theory exact sequence \eqref{kthy-ind-lim}, these isomorphisms imply that $ q_t: C^*_r(H \ltimes Y_i \times [0,1], \omega) \to C^*_r(H \ltimes Y_i, \omega_t)$ also induces an isomorphism 
 \begin{equation*}
K_*(C^*_r(H \ltimes  Y_i \times [0,1], \omega))  \cong K_*(C^*_r(H \ltimes Y_i, \omega_t))
 \end{equation*}
 for each $i$.

The continuity of $K$-theory and the isomorphisms of \eqref{ind-lim} now translate $q_t$ into an isomorphism 
\[K_*(C_0(X\times [0,1], \K) \rtimes_\beta H) \cong  K_*(C_0(X, \K) \rtimes_{\beta_t} H) ,\]
and the commutativity of the diagram \eqref{diagram} takes this isomorphism to the isomorphism $\underline{\quad} \# [ev_t^H]: KK^H_*(\C, C_0(X \times [0,1], \K)) \to KK^H_*(\C, C_0(X, \K))$.

It follows that the hypotheses of Proposition 1.6 in \cite{ELPW} are again satisfied by the element $[ev_t^G] \in KK^G(C_0(X \times [0,1], \K), C_0(X, \K))$, and so we have an isomorphism 
\[K_*(C^*_r(G \ltimes X \times [0,1], \omega)) \cong K_*(C^*_r(G \ltimes X, \omega_t))\]
for any $t \in [0,1]$. The same arguments we employed above in the case where $X$ is compact also tell us that this isomorphism is induced by the $*$-homomorphism of evaluation at $t$.  This finishes the proof of Theorem \ref{main}.
\end{proof}

\begin{corollary}
Let $G \ltimes X$ be a second countable locally compact transformation group such that $G$ satisfies the Baum-Connes conjecture with coefficients, and let $\omega = \{\omega_t\}_{t \in [0,1]}$ be a homotopy of 2-cocycles on $G \ltimes X$.  The homotopy induces an isomorphism 
\[K_*(C^*_r(G \ltimes X, \omega_0)) \cong K_*(C^*_r(G \ltimes X, \omega_1))\]
of the $K$-theory groups of the reduced twisted transformation-group $C^*$-algebras.
\end{corollary}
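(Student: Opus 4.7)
The corollary is an immediate consequence of Theorem \ref{main}, so the plan is essentially just a two-line composition argument. Given a homotopy $\omega = \{\omega_t\}_{t \in [0,1]}$ of 2-cocycles on $G \ltimes X$, I would apply Theorem \ref{main} once at $t = 0$ and once at $t = 1$ to obtain the two isomorphisms
\[
(q_0)_*\colon K_*(C^*_r(G \ltimes X \times [0,1], \omega)) \xrightarrow{\;\cong\;} K_*(C^*_r(G \ltimes X, \omega_0))
\]
and
\[
(q_1)_*\colon K_*(C^*_r(G \ltimes X \times [0,1], \omega)) \xrightarrow{\;\cong\;} K_*(C^*_r(G \ltimes X, \omega_1)).
\]
Composing one with the inverse of the other yields the desired isomorphism $K_*(C^*_r(G \ltimes X, \omega_0)) \cong K_*(C^*_r(G \ltimes X, \omega_1))$.

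Since Theorem \ref{main} is already in hand, there is no substantive obstacle here; the only thing to verify is that the hypotheses of the theorem (second countability of $G \ltimes X$, $G$ satisfying Baum--Connes with coefficients, and $\omega$ being a continuous homotopy of cocycles) are precisely those assumed in the corollary, which they are. One might additionally remark that the resulting isomorphism $(q_1)_* \circ (q_0)_*^{-1}$ depends only on the homotopy class of $\omega$ through homotopies fixing the endpoints, since a homotopy of homotopies would produce a commuting square of evaluation maps at the $K$-theory level. Writing up the proof is therefore just a matter of invoking Theorem \ref{main} at $t=0$ and $t=1$ and composing.
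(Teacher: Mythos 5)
Your argument is correct and is exactly the intended one: the paper states the corollary without a separate proof precisely because it follows by applying Theorem \ref{main} at $t=0$ and $t=1$ and composing $(q_1)_*\circ (q_0)_*^{-1}$. Nothing further is needed.
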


 \section{FUTURE WORK}
The results established in this article provide an answer to the question ``When does a homotopy $\{\omega_t\}_{t \in [0,1]}$ of 2-cocycles on a groupoid $\G$ give rise to an isomorphism of the $K$-theory groups 
\[K_*(C^*_r(\G, \omega_0)) \cong K_*(C^*_r(\G, \omega_1))\]
of the twisted groupoid $C^*$-algebras?'' in the case when $\G = G \ltimes X$ is a transformation group.  
In order to apply the proof techniques we have employed here, following \cite{ELPW}, to a broader class of groupoids, we will need to work harder. Proposition 1.6 in \cite{ELPW}, which is the crucial technical lemma for Theorem 1.9 in \cite{ELPW} as well as for our Theorem \ref{main}, allows us to study a homotopy of cocycles $\omega$ on $G \ltimes X$ by simply studying the restriction of $\omega$ to $H \ltimes X$ for all compact subgroups $H$ of $G$.  Since compact groups are very friendly, well-understood objects, this latter question is much easier to solve.  As of this writing, we are unaware of any analogous simplification results for more general groupoids.

As mentioned in the Introduction, Kumjian, Pask, and Sims have used completely different techniques in \cite{kps-kthy} to show that if a cocycle $\omega$ on a higher-rank graph $\Lambda$ admits a cocycle logarithm, so that $\omega(\lambda, \mu)$ can be written as $\exp(c(\lambda, \mu))$ for some $\R$-valued 2-cocycle $c$ on $\Lambda$, then 
\[K_*(C^*(\Lambda, \omega)) \cong K_*(C^*(\Lambda)).\]
In a forthcoming paper \cite{eag-kgraph}, we extend the techniques of \cite{kps-kthy} to show that any homotopy of cocycles on $\Lambda$ gives rise to $K$-equivalent twisted higher-rank graph $C^*$-algebras; it has been suggested to us that these techniques might also apply to the case of Deaconu-Renault groupoids.

\begin{acknowledgements} 
This research formed part of my PhD thesis, and I would like to thank my advisor Erik van Erp for his encouragement and assistance.  I would also especially like to thank Jody Trout for his help and support throughout my PhD work. We also thank the anonymous referee for many helpful comments and suggestions, in particular for suggesting the line of argument used to extend the proof of Theorem \ref{main} from the case where $X$ is compact to the general case.  Siegfried Echterhoff also provided a helpful suggestion in this direction.

Much of the research in the present paper was completed during a visit to the Universidad Carlos III in Madrid, Spain; thanks are also due to the mathematics department there, in particular Fernando Lled\'o and Francisco Marcell\'an, for making that very enjoyable visit possible.
 \end{acknowledgements}

\end{document}